\newtheorem{thm}{Theorem}
\newtheorem{lemma}{Lemma}
\newtheorem{conj}[thm]{Conjecture}
\theoremstyle{definition}
\newtheorem*{definition*}{Definition}
\newtheorem*{acknowledgement}{Acknowledgments}
\newcommand{\cT}{\mathcal{T}}
\newcommand{\cH}{\mathcal{H}}
\newcommand{\bH}{\mathbf{H}}
\newcommand{\esp}[1]{\mathbb{E}\left(#1\right)}
\newcommand{\pth}[1]{\left(#1\right)}
\newcommand{\sset}[2]{\left\{#1~\middle|~#2\right\}}
\newcommand{\ee}{\mathrm{e}}
\DeclareMathOperator{\et}{and}
\renewcommand{\le}{\leqslant}
\renewcommand{\leq}{\leqslant}
\renewcommand{\ge}{\geqslant}
\renewcommand{\geq}{\geqslant}
\title{Packing and covering balls in graphs excluding a minor}
\author[N. Bousquet]{Nicolas Bousquet}
\address{Laboratoire G-SCOP, CNRS, Univ. Grenoble Alpes, Grenoble, France.}
\email{nicolas.bousquet@grenoble-inp.fr}
\author[W. CvB]{Wouter Cames van Batenburg}
\address{D\'epartement d'Informatique, Universit\'e Libre de Bruxelles, Brussels, Belgium.}
\email{wcamesva@ulb.ac.be}
\author[L. Esperet]{Louis Esperet}
\address{Laboratoire G-SCOP, CNRS, Univ. Grenoble Alpes, Grenoble, France.}
\email{louis.esperet@grenoble-inp.fr}
\author[G. Joret]{Gwena\"el Joret}
\address{D\'epartement d'Informatique, Universit\'e Libre de Bruxelles, Brussels, Belgium.}
\email{gjoret@ulb.ac.be}
\author[W. Lochet]{William Lochet}
\address{Algorithms Research Group, University of Bergen, Bergen, Norway.}
\email{william.lochet@gmail.com}
\author[C. Muller]{Carole Muller}
\address{D\'epartement de Math\'ematique, Universit\'e Libre de Bruxelles, Brussels, Belgium.}
\email{camuller@ulb.ac.be}
\author[F.\ Pirot]{Fran\c{c}ois Pirot}
\address{D\'epartement d'Informatique, Universit\'e Libre de Bruxelles, Brussels, Belgium, and Laboratoire G-SCOP, CNRS, Univ. Grenoble Alpes, Grenoble, France.}
\email{francois.pirot@grenoble-inp.fr}
\thanks{N.\ Bousquet is supported by ANR Project DISTANCIA (\textsc{ANR-17-CE40-0015}). 
W.\ Cames van Batenburg, G.\ Joret, and F.\ Pirot are supported by an ARC grant from the Wallonia-Brussels Federation of Belgium. 
L.\ Esperet is supported by ANR Projects GATO
(\textsc{ANR-16-CE40-0009-01}) and GrR (\textsc{ANR-18-CE40-0032}). 
C.\ Muller is supported by the Luxembourg National Research Fund (FNR) (Grant Agreement Nr 11628910).} 
\begin{document}
\begin{abstract}
We prove that for every integer $t\ge 1$ there exists a constant $c_t$ such that for every $K_t$-minor-free graph $G$, and every set $S$ of balls in $G$, the minimum size of a set of vertices of $G$ intersecting all the balls of $S$ is at most $c_t$ times the maximum number of vertex-disjoint balls in $S$. This was conjectured by Chepoi, Estellon, and Vax{{\`e}}s in 2007 in the special case of planar graphs and of balls having the same radius.
\end{abstract}
\maketitle

\section{Introduction}

A hypergraph $\cH$ is a pair $(V,\mathcal{E})$ where $V$ is the vertex set and $\mathcal{E}\subseteq 2^V$ is the edge set of $\cH$.
A \emph{matching} in  a hypergraph $\cH$ is a set of pairwise vertex-disjoint edges, and a \emph{transversal} is a set of vertices that intersects every edge.
The \emph{matching number} of a hypergraph $\cH$, denoted by $\nu(\cH)$, is the maximum number of edges in a matching. The \emph{transversal number} of $\cH$, denoted by $\tau(\cH)$, is the minimum size of a transversal of $\cH$. We can also consider the linear relaxation of these two parameters: we define the \emph{fractional matching number} $\nu^*(\cH)$ and the \emph{fractional transversal number} $\tau^*(\cH)$ as follows.
\begin{align*}
\nu^*(\cH) &= \max \sum_{e\in \mathcal{E}(\cH)} w_e \\
\mbox{given that } & \begin{cases}
\sum_{e \ni v}\limits w_e \le 1 &\quad \mbox{ for every vertex } v \mbox{ of }\cH  \\
w_e \ge 0 &\quad \mbox{ for every edge } e \mbox{ of }\cH,
\end{cases}
\end{align*}
and the dual of this linear program is
\begin{align*}
\tau^*(\cH) &= \min \sum_{v\in V(\cH)} w_v \\
\mbox{given that } & \begin{cases}
\sum_{v \in e}\limits w_v \ge 1 &\quad \mbox{ for every edge } e \mbox{ of } \cH \\
w_v \ge 0 &\quad \mbox{ for every vertex } v \mbox{ of } \cH.
\end{cases}
\end{align*}

By the strong duality theorem, $\nu(\cH)\le \nu^*(\cH)=\tau^*(\cH)\le \tau(\cH)$ for every hypergraph $\cH$. Given a class $\mathcal{C}$ of hypergraphs, a classical problem in combinatorial optimization is to decide whether there exists a function $f$ such that $\tau(\cH)\le f(\nu(\cH))$ for every $\cH\in \mathcal{C}$. If this is the case the class $\mathcal{C}$ is sometimes said to have the \emph{Erd\H os-P\'osa property}. Classical examples include the family of all cycles of a graph~\cite{ErPo65} (i.e.\ given a graph $G=(V,E)$ we consider the hypergraph with vertex set $V$ whose edges are all the cycles of $G$), and the family of all directed cycles of a directed graph~\cite{RRST96}. A desirable property is the existence of a constant $c$ such that $\tau(\cH)\le c\cdot \tau^*(\cH)$ or $\nu^*(\cH)\le c\cdot \nu(\cH)$, or (even better)
$\tau(\cH)\le c\cdot \nu(\cH)$ for every $\cH\in \mathcal{C}$. These properties are often useful in the design of approximation algorithms using a primal-dual approach (see for instance~\cite{GW97,FHRV07}). 

\medskip

Given a graph $G=(V,E)$, an integer $r\ge 0$, and a vertex $v\in V$, we denote by $B_r(v)$ the \emph{ball of radius $r$} in $G$ centered in $v$, that is\[ B_r(v) \coloneqq \sset{u\in V(G)}{d_G(u,v)\le r},\] where $d_G(u,v)$ denotes the distance between $u$ and $v$ in $G$ (we will omit the subscript $G$ when the graph is clear from the context).
We say that a hypergraph $\cH$ is a \emph{ball hypergraph} of $G$ if $\cH$ has vertex set $V=V(G)$ and each edge of $\cH$ is a ball $B_r(v)$ in $G$ for some integer $r$ and some vertex $v\in V$. If all the balls forming the edges of $\cH$ have the same radius $r$, we say that $\cH$ is an \emph{$r$-ball hypergraph} of $G$.

\medskip

Chepoi, Estellon, and Vax{{\`e}}s~\cite{CEV07} proved the existence of a universal constant $\rho$ such that for every $r\ge 0$ and every planar graph $G$ of diameter at most $2r$, the vertices of $G$ can be covered with at most $\rho$ balls of radius $r$. This result was extended to graphs embeddable on a fixed surface with a bounded number of apices in~\cite{BC14}.
Note that $G$ has diameter at most $2r$ if and only if there are no two disjoint balls of radius $r$ in $G$. 
Also, a set of balls of radius $r$ in $G$ covers all of $V(G)$ if and only if their centers intersect all balls of radius $r$ in $G$. 
Thus, these results state equivalently the existence of a universal constant $\rho$ such that for every $r\ge 0$ and every planar (or more generally bounded genus) graph $G$, if the $r$-ball hypergraph $\cH$ consisting of all balls of radius $r$ satisfies $\nu(\cH)=1$, then $\tau(\cH)\le \rho$. 
With this interpretation in mind, Chepoi, Estellon, and Vax{{\`e}}s~\cite{Cam10} conjectured the following generalization in 2007 (see also~\cite{Est07}).

\begin{conj}[Chepoi, Estellon, and Vax{{\`e}}s~\cite{Cam10}]
\label{conj:chep}
There exists a constant $c$ such that for every integer $r\geq 0$, every planar graph $G$, and every $r$-ball hypergraph $\cH$ of $G$, we have $\tau(\cH)\le c\cdot\nu(\cH)$.
\end{conj}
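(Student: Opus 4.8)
The plan is to prove the stronger statement for all $K_t$-minor-free graphs (not just planar graphs) and for balls of arbitrary, possibly distinct, radii, since the abstract announces exactly this generalisation of Conjecture~\ref{conj:chep}. The natural strategy is a two-step reduction: first pass from the integral quantities $\tau,\nu$ to their fractional relaxations, and then control the fractional parameters using the structure of $K_t$-minor-free graphs. For the first step I would invoke the general machinery bounding the integrality gap of set systems with bounded VC-dimension (or, more precisely, bounded dual shatter function / fractional Helly number): ball hypergraphs of $K_t$-minor-free graphs have VC-dimension bounded by a function of $t$, so by the Br\"onnimann--Goodrich / Even--Rawitz--Shahar type results one gets $\tau(\cH)\le O_t(\tau^*(\cH)\log\tau^*(\cH))$, and with a bit more care (using that these hypergraphs actually have bounded ``dual'' VC-dimension and a fractional Helly property) one can hope to remove the logarithm and get $\tau(\cH) = O_t(\tau^*(\cH)) = O_t(\nu^*(\cH))$. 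So the crux is reduced to showing $\nu^*(\cH)\le c_t\cdot \nu(\cH)$, i.e.\ bounding the gap between the fractional and integral matching numbers.

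For that second step the key geometric input is a \emph{Helly-type} property of balls in $K_t$-minor-free graphs. I would aim to show there is a constant $h_t$ such that any family of pairwise-intersecting balls in a $K_t$-minor-free graph has a ``small'' common hitting set, or more usefully, satisfies a fractional Helly property: if a constant fraction of the $\binom{k}{h_t}$ sub-$h_t$-tuples of a family of $k$ balls have nonempty intersection, then a constant fraction of the $k$ balls share a common vertex. Combined with the bounded VC-dimension, a fractional Helly property is exactly what is needed (via the standard $(p,q)$-theorem / Alon--Kleitman-style argument, or directly via the Matou\v{s}ek-type bounds relating $\tau^*$ to $\nu$) to conclude $\tau(\cH)=O_t(\nu(\cH))$ in one stroke, bypassing a separate treatment of $\nu^*$ versus $\nu$. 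To establish the fractional Helly property for balls, I would use the structure theorem for $K_t$-minor-free graphs only indirectly: the real engine should be that $K_t$-minor-free graphs have bounded \emph{Assouad--Nagata dimension} (shown by Liu, or by Bonamy et al.), equivalently admit, for every scale $r$, a covering by clusters of diameter $O_t(r)$ with bounded ``$r$-multiplicity''. Such a covering converts a packing of balls of radius $\approx r$ into a proper colouring problem with $O_t(1)$ colours, yielding $\nu^* \le O_t(\nu)$ at a fixed scale.

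The remaining subtlety, and what I expect to be the main obstacle, is that the balls in $\cH$ need \emph{not} all have the same radius; a sparse-cover / Assouad--Nagata argument naturally works scale by scale. To handle a mixture of radii I would dyadically classify the balls by radius into classes $\cH_0,\cH_1,\dots$ with radii in $[2^i,2^{i+1})$, bound $\nu^*(\cH_i)\le O_t(\nu(\cH_i))\le O_t(\nu(\cH))$ within each class using the fixed-scale argument, and then argue that only $O_t(1)$ of these radius classes can ``interact'' in a way that inflates the global fractional matching — roughly, a ball of radius $2^i$ and a ball of radius $2^j$ with $j \gg i$ that intersect force their centers close on the scale of the larger ball, so the contribution of the small-radius classes can be absorbed. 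Carrying out this absorption cleanly — i.e.\ gluing the per-scale bounds into a single bound $\nu^*(\cH)\le c_t\,\nu(\cH)$ with $c_t$ independent of the radii and of the number of scales — is the delicate combinatorial heart of the argument; everything else (VC-dimension bounds, the $\epsilon$-net / LP-duality step, the sparse cover) is by now fairly standard and can be quoted or adapted with modest effort.
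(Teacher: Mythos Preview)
Your outline diverges substantially from the paper's proof, and has two genuine gaps that the paper's approach sidesteps entirely.

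\textbf{First gap: removing the logarithm.} You correctly note that bounded VC-dimension gives $\tau(\cH)\le O_t(\tau^*(\cH)\log\tau^*(\cH))$, and you ``hope'' to drop the logarithm via a fractional Helly property and an Alon--Kleitman-style $(p,q)$-theorem. But you never establish fractional Helly for balls in $K_t$-minor-free graphs, and deriving it from Assouad--Nagata dimension is not at all routine (AN-dimension gives a bounded-multiplicity cover at each scale, not a Helly-type intersection statement). The paper does \emph{not} remove the logarithm this way: it first proves the near-linear bound $\tau\le O_t(\nu\log\nu)$ exactly as you say, and then uses a separate bootstrapping induction on $\nu$ to upgrade it to linear. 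In that induction one takes a maximum matching $\mathcal{B}$, splits the remaining balls according to how many elements of $\mathcal{B}$ they meet, and shows that the ``high-degree'' part has matching number at most $\nu/2$ (so induction applies) while the ``low-degree'' part decomposes into boundedly many sub-hypergraphs each with bounded $\nu$ (so the near-linear bound applies with a constant loss).

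\textbf{Second gap: the dyadic gluing.} You yourself flag this as ``the delicate combinatorial heart'', and indeed your sketch does not work as stated. The bound $\nu^*(\cH_i)\le O_t(\nu(\cH_i))$ per scale only yields $\nu^*(\cH)\le O_t\bigl(\sum_i \nu(\cH_i)\bigr)$, and $\sum_i\nu(\cH_i)$ is unbounded in terms of $\nu(\cH)$: think of concentric balls of radii $1,2,4,\dots,2^m$ in a path, where $\nu(\cH)=1$ but every scale contributes $1$. Your absorption heuristic (``small balls have centers close on the scale of the big ball'') does not by itself collapse the sum. The paper avoids scale decomposition altogether: its engine is a pair of lemmas (Lemmas~\ref{lem:minor1} and~\ref{lem:minor2}) showing that if one chooses, for each intersecting pair of incomparable balls, a suitably defined \emph{median vertex} on a shortest path between the centers, then the resulting ``intersection graph'' is a \emph{minor} of $G$. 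This works for arbitrary mixed radii in one shot and immediately yields $\nu^*(\cH)\le \ee\,d\cdot\nu(\cH)$ via a Tur\'an/Caro--Wei argument, with $d$ the maximum average degree of a minor of $G$. The same minor lemma is what drives the $\nu/2$ halving in the bootstrapping step. This median-vertex-to-minor construction is the idea you are missing; nothing in your plan plays its role.
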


If one considers all metric spaces obtained as standard graph-metrics of planar graphs, then Conjecture~\ref{conj:chep} states that these metric spaces satisfy the so-called {\em bounded covering-packing property}~\cite{CEN17}. 
Recently, Chepoi, Estellon, and Naves~\cite{CEN17} showed that other metric spaces do have this property, including the important case of Busemann surfaces. 
(Quoting~\cite{CEN17}, the latter are roughly the geodesic metric spaces homeomorphic to $\mathbb{R}^2$ in which the distance function is convex; 
they generalize Euclidean spaces, hyperbolic spaces, Riemannian manifolds of global nonpositive sectional curvatures, and CAT(0) spaces.) 

Going back to Conjecture~\ref{conj:chep}, let us emphasize that a key aspect of this conjecture is that the constant $c$ is independent of the radius $r$. 
If $c$ is allowed to depend on $r$, then the conjecture is known to be true. 
In fact, it holds more generally for all graph classes with bounded expansion, as shown by Dvo\v r\'ak~\cite{Dvo13}. 

Some evidence for Conjecture~\ref{conj:chep} was given by Bousquet and Thomass\'e~\cite{BoTh}, who proved that it holds with a polynomial bound instead of a linear one. 
More generally, they proved that for every integer $t \geq 1$, there exists a constant $c_t$ such that for every integer $r\geq 0$, every $K_t$-minor free graph $G$, and every $r$-ball hypergraph $\cH$ of $G$, we have $\tau(\cH)\le c_t\cdot\nu(\cH)^{2t+1}$.

\medskip

The main result of this paper is that Conjecture~\ref{conj:chep} is true, and furthermore it is not necessary to assume that all the balls have the same radius.
\begin{thm}[Main result]
\label{thm:main}
For every integer $t\geq 1$, there is a constant $c_t$ such that $\tau(\cH)\le c_t \cdot \nu(\cH)$ for every $K_t$-minor-free graph $G$ and every ball hypergraph $\cH$ of $G$.
\end{thm}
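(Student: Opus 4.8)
The plan is to reduce the general ball-hypergraph statement to the bounded-radius case and then boost the polynomial bound of Bousquet and Thomass\'e to a linear one. First I would handle the reduction to bounded radius: given a ball hypergraph $\cH$ of a $K_t$-minor-free graph $G$ with $\nu(\cH)=\nu$, I would look at a maximum matching $\{B_{r_1}(v_1),\dots,B_{r_\nu}(v_\nu)\}$ and observe that every edge $B_r(v)$ of $\cH$ meets some $B_{r_i}(v_i)$. Grouping the edges by which ball of the matching they hit, it suffices to transversally cover, for each $i$, all edges of $\cH$ meeting $B_{r_i}(v_i)$; such an edge $B_r(v)$ contains a vertex within distance $r_i$ of $v_i$, and after contracting $B_{r_i}(v_i)$ to a single vertex $x_i$ one gets a (still $K_t$-minor-free) graph in which all these edges become balls containing $x_i$. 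So the problem localizes to covering a family of balls that all contain a fixed vertex $x$; here a BFS/layering argument around $x$ partitions balls by the layer of their center, and on each layer one can hope for a bounded-radius reduction via a quotient construction.

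The heart of the argument is therefore the bounded-radius case, and the key step is replacing the exponent $2t+1$ in the Bousquet--Thomass\'e bound by an absolute constant. The natural route is via the fractional relaxation: by strong LP duality it is enough to show $\tau(\cH)\le c_t\,\tau^*(\cH)$ together with $\tau^*(\cH)=\nu^*(\cH)\le c_t'\,\nu(\cH)$. The first inequality, $\tau\le O(\tau^*)$, should follow from a bounded-VC-dimension argument: $r$-ball hypergraphs of $K_t$-minor-free graphs have VC dimension bounded in terms of $t$ (this is classical, following e.g.\ from the fact that a large shattered set forces a large minor), and for hypergraphs of bounded VC dimension the Br\"onnimann--Goodrich / $\eps$-net machinery gives $\tau=O(\tau^*\log\tau^*)$, or even $\tau=O(\tau^*)$ when one additionally uses the shallow-minor / separator structure available here. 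The second inequality, $\nu^*\le O(\nu)$, is the genuinely new combinatorial input: one wants to show that a large fractional matching of balls in a $K_t$-minor-free graph can be ``rounded'' to a large integral one, losing only a constant factor independent of $r$.

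I expect that last point — fractional-to-integral rounding of ball packings with an $r$-independent loss — to be the main obstacle, and I would attack it with the graph minor structure theorem. After applying the structure theorem to $G$, one is reduced (via clique-sums and a bounded number of apices and vortices) to graphs almost-embeddable on a fixed surface; on such graphs one can use the fact that a surface of bounded genus cut along a few noncontractible curves becomes planar, and for planar (or bounded-genus) graphs one can exploit that balls behave like ``fat'' topological disks, so a fractional packing of disks in the plane with total weight $w$ contains $\Omega(w)$ disjoint disks. Concretely I would take a maximal integral matching $M$ of balls, argue that every ball in the support of the fractional optimum meets some ball of $M$, contract each ball of $M$, and then bound the total fractional weight concentrated around each contracted vertex using planarity/bounded-genus and a discharging or region-counting argument; iterating or using a layered BFS decomposition around each contracted center keeps the radii from mattering. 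Assembling the three pieces — reduction to bounded radius, $\tau=O(\tau^*)$ via VC dimension, and $\nu^*=O(\nu)$ via the structure theorem — then yields $\tau(\cH)\le c_t\,\nu(\cH)$ for a constant $c_t$ depending only on $t$, as required.
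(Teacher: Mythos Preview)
Your proposal has the right instinct about the fractional relaxation --- the paper does prove $\nu^*(\cH)\le \ee\,d\cdot\nu(\cH)$ (Theorem~\ref{thm:frac}) and combines it with the VC-dimension bound to get $\tau(\cH)=O(\nu(\cH)\log\nu(\cH))$ (Theorem~\ref{thm:nearlinear}). But you have misplaced the main difficulty, and two of your three steps do not work as written.

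First, your reduction to bounded radius is both unnecessary and broken. Contracting $B_{r_i}(v_i)$ to a vertex $x_i$ does \emph{not} turn the other balls of $\cH$ into balls of the contracted graph: a set $B_r(v)$ of $G$ need not be a metric ball in $G/B_{r_i}(v_i)$, since distances can shrink along paths through $x_i$. The subsequent BFS/layering sketch does not bound the radii either --- grouping centers by distance to $x$ says nothing about the radii of the balls. The paper never reduces to bounded radius; it handles arbitrary radii throughout.

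Second, and more importantly, the step you wave away is exactly where the content lies. The $\eps$-net machinery gives only $\tau=O(\tau^*\log\tau^*)$, and there is no known way to remove that logarithm for ball hypergraphs by a generic ``shallow-minor / separator'' observation. The paper accepts the log and uses the resulting near-linear bound $f_t$ only as a \emph{bootstrapping input}. The linear bound comes from a separate induction on $\nu(\cH)$: take a maximum matching $\mathcal{B}$, split the balls of $\cH$ into those meeting at most $\tfrac32 d_t$ members of $\mathcal{B}$ and those meeting more. The first family is covered by applying $f_t$ to boundedly many sub-hypergraphs, whose number is controlled by Lemma~\ref{lem:bounded-hyper-edges} (a bound on small edges of the packing-hypergraph, proved via Lemma~\ref{lem:minor2}). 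The second family has matching number at most $|\mathcal{B}|/2$ --- this is shown by building, via Lemma~\ref{lem:minor1}, a bipartite intersection graph that is a minor of $G$ and hence has bounded average degree --- so induction applies. This packing-hypergraph / minor-construction machinery is the genuine new idea and is absent from your proposal.

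Finally, for $\nu^*=O(\nu)$ the paper avoids the structure theorem entirely: Lemma~\ref{lem:minor1} shows directly that a suitable intersection graph of balls is a \emph{minor} of $G$, after which average-degree bounds for $K_t$-minor-free graphs plus Tur\'an / Caro--Wei finish the job. Your structure-theorem sketch (``balls behave like fat topological disks'', ``discharging or region-counting'') is not correct as stated --- balls in planar graphs are not topological disks in any useful sense --- and is far heavier than what is needed.
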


A set $S$ of vertices of a graph $G$ is \emph{$r$-dominating} if each vertex of $G$ is at distance at most $r$ from $S$, and \emph{$r$-independent} if any two vertices of $S$ are at distance at least $2r+1$ apart in $G$. Note that if we take  $\cH$ to be the $r$-ball hypergraph consisting of all balls of radius $r$ in $G$, Theorem~\ref{thm:main} has the following interesting graph-theoretic interpretation: if $G$ is $K_t$-minor-free, then the minimum size of an $r$-dominating set is at most $c_t$ times the maximum size of an $r$-independent set in $G$.

\medskip

Our proof of Theorem~\ref{thm:main} follows a bootstrapping approach. 
It relies on the existence of some function $f_t$ such that $\tau(\cH)\le f_t(\nu(\cH))$, i.e.\ on the Erd\H os-P\'osa property of the ball hypergraphs of $K_t$-minor-free graphs, which is used in the proof when $\nu(\cH)$ is not `too big'. 
However, showing this property was an open problem. 
This was known for $r$-ball hypergraphs, by the result of Bousquet and Thomass\'e~\cite{BoTh}, but their proof method does not extend to the case of balls of arbitrary radii. 
For this reason, as a first step towards proving Theorem~\ref{thm:main}, we prove Theorem~\ref{thm:nearlinear} below establishing said Erd\H os-P\'osa property. 
We also note that, while the bounding function in Theorem~\ref{thm:nearlinear} is not optimal, it is a near linear bound of the form $\tau(\cH)\le c_t \cdot \nu(\cH) \log \nu(\cH)$ where $c_t$ is a small explicit constant polynomial in $t$. 
This is in contrast with the constant $c_t$ in our proof of Theorem~\ref{thm:main} which is large, exponential in $t$. 
Thus, the bound in Theorem~\ref{thm:nearlinear} is better for small values of $\nu(\cH)$.   
(We note that logarithms in this paper are natural, and the base of the natural logarithm is denoted by $\ee$.)  

\begin{thm}[Near linear bound]
\label{thm:nearlinear}
  Let $G$ be a graph with no $K_t$-minor and such that every minor of $G$ has average degree at most $d$. Then for every ball hypergraph $\cH$ of $G$, $$\tau(\cH)\le 2\ee (t-1) \, d \cdot\nu(\cH)\cdot\log(11\ee \, d \cdot \nu(\cH)).$$ In particular, $\tau(\cH) \le c t^2\sqrt{\log t} \cdot \nu(\cH)\cdot \log(t \, \cdot \nu(\cH))$ for some absolute constant $c>0$, and   if $G$ is planar then $\tau(\cH) \le 48\,\ee  \cdot \nu(\cH)\cdot \log(66\,\ee \, \cdot \nu(\cH))$.
\end{thm}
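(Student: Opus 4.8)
The plan is to combine the LP-duality chain $\tau^*(\cH)=\nu^*(\cH)$ with two ingredients: a *fractional Helly-type* bound controlling the integrality gap $\tau(\cH)/\tau^*(\cH)$, and a structural bound controlling $\nu^*(\cH)/\nu(\cH)$ for ball hypergraphs of sparse graphs. More concretely, I would first bound $\nu^*(\cH)$ in terms of $\nu(\cH)$. Fix an optimal fractional matching $(w_e)$ with $\sum_e w_e = \nu^*(\cH)$, and let $r_1\le r_2\le \dots$ be the radii appearing among the balls. The key geometric observation is that balls in a $K_t$-minor-free graph cannot overlap too much: if many balls $B_{r_i}(v_i)$ of comparable radius all pass through a common vertex, then a large family of them is pairwise intersecting, and pairwise-intersecting balls in such a graph — after passing to an appropriate minor obtained by contracting each ball to its center — would force a $K_s$-minor for $s$ growing with the family size. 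Since every minor of $G$ has average degree at most $d$, one gets $s \le t-1$ but also $s$ is large, a contradiction once the family exceeds roughly $2d$ (this is where the factor $(t-1)d$ and the constant $11\ee d$ enter). Dyadically grouping the radii into $O(\log(\text{something}\cdot\nu(\cH)))$ classes, each class contributes only $O(d\cdot\nu(\cH))$ to $\nu^*$, giving $\nu^*(\cH) = O\big((t-1)d\cdot\nu(\cH)\log(11\ee d\cdot\nu(\cH))\big)$.

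For the second step I would bound $\tau(\cH) \le O(1)\cdot \tau^*(\cH) = O(1)\cdot\nu^*(\cH)$, i.e.\ show that ball hypergraphs of $K_t$-minor-free graphs have bounded integrality gap for the covering LP. The standard tool here is that bounded VC-dimension (or more precisely a bounded shallow-cell complexity / bounded "union complexity"), via the Brönnimann--Goodrich / Chan--Har-Peled type $\eps$-net machinery, yields $\tau(\cH) = O(\tau^*(\cH)\log\tau^*(\cH))$ in general — but here the target has no extra logarithm, so instead I would either (i) use that ball hypergraphs of proper minor-closed classes admit $\eps$-nets of size $O(1/\eps)$ (linear-size $\eps$-nets, known for such geometrically-structured set systems), turning a fractional transversal of weight $W$ into an integral one of size $O(W)$; or (ii) directly round: take the optimal fractional transversal $(w_v)$, and iteratively pick the uncovered ball of smallest radius, adding its center's ball-neighborhood — using again that few small balls can be pairwise disjoint while stabbing many large balls. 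Combining $\tau(\cH)=O(\nu^*(\cH))$ with the first step yields the main bound, and then the "in particular" clauses follow by plugging in $d = O(t\sqrt{\log t})$ (Kostochka--Thomason) and $d=6$, respectively (planar graphs and their minors have average degree $<6$).

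The main obstacle I anticipate is the first step — precisely quantifying how much balls of roughly equal radius can overlap in a $K_t$-minor-free graph. The clean statement one wants is: \emph{if $B_{r}(v_1),\dots,B_{r}(v_k)$ are pairwise intersecting balls of the same radius $r$ in $G$, then $G$ has a $K_{\lceil k/\text{const}\rceil}$-minor} (or at least a minor of average degree $\ge k/\text{const}$), proved by contracting a BFS-tree inside each ball to a single branch vertex and checking that two intersecting same-radius balls yield an edge between the corresponding branch vertices — the subtlety is handling the balls being intersecting but the contracted trees possibly sharing vertices, which requires choosing the trees/centers carefully (e.g.\ a Helly-type argument or a clever "halving the radius" trick so that the relevant subtrees become genuinely disjoint). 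Getting the constants sharp enough to land on $2\ee(t-1)d$ and $48\ee$ will require care with this contraction argument and with the dyadic bookkeeping, but conceptually it is the overlap bound that does the real work; the rounding step and the corollaries are comparatively routine.
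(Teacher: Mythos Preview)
Your two-step plan inverts where the logarithm actually lives, and in doing so Step~2 becomes essentially as hard as the paper's main theorem rather than the near-linear one. In the paper the bound $\nu^*(\cH)\le \ee\,d\cdot\nu(\cH)$ holds \emph{without} any logarithmic factor (this is Theorem~\ref{thm:frac}, proved via Lemma~\ref{lem:both1} and a Tur\'an/Caro--Wei argument on the intersection graph of balls), while the logarithm enters only in passing from $\tau^*$ to $\tau$: by Bousquet--Thomass\'e the ball hypergraph has VC-dimension at most $t-1$, and then the Ding--Seymour--Winkler bound $\tau\le 2\delta\,\tau^*\log(11\tau^*)$ gives the log. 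Chaining $\tau\le 2(t-1)\nu^*\log(11\nu^*)$ with $\nu^*\le \ee\,d\,\nu$ yields exactly the stated inequality.

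Your Step~1 as written does not work: dyadically grouping radii gives a number of classes proportional to $\log(r_{\max}/r_{\min})$, which is not bounded by any function of $\nu(\cH)$ (the radii can be arbitrary). Moreover, the overlap heuristic ``many pairwise-intersecting balls force a large clique minor'' is false in this generality --- all balls through a single vertex are pairwise intersecting yet carry no minor information. The paper's contraction lemma (Lemma~\ref{lem:both1}) requires the subtler hypothesis that the chosen median vertex $x_{ij}$ lies in \emph{only} $B_i$ and $B_j$ among the selected balls; this is what the random-sampling step (Lemma~\ref{lem:hredu}) arranges, and it is what produces the clean $\nu^*\le \ee\,d\,\nu$ with no log. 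Your Step~2, $\tau(\cH)=O(\tau^*(\cH))$, is precisely the hard statement: linear-size $\eps$-nets for ball hypergraphs in minor-closed classes are not a known off-the-shelf tool, and establishing $\tau=O(\nu^*)=O(\nu)$ is exactly Theorem~\ref{thm:main}, whose proof \emph{uses} Theorem~\ref{thm:nearlinear} as a bootstrapping ingredient. So you cannot invoke it here without circularity.
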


The proof of Theorem~\ref{thm:nearlinear} uses known results on the VC-dimension of ball hypergraphs of $G$ when $G$ excludes a minor, together with classical bounds relating $\tau(\cH)$ and $\tau^*(\cH)$ when $\cH$ has bounded VC-dimension, as well as the following theorem. 

\begin{thm}[Fractional version]\label{thm:frac}
Let $G$ be a graph and let $d$ be the maximum average degree of a minor of $G$. Then for every ball hypergraph $\cH$ of $G$, we have $\nu^*(\cH)\le \ee \,d \cdot \nu(\cH)$. \\
In particular, if $G$ is planar then $\nu^*(\cH) \le 6\ee  \cdot \nu(\cH)$ and if $G$ has no $K_t$-minor then $\nu^*(\cH) \le c \cdot t\sqrt{\log t} \cdot \nu(\cH)$, for some absolute constant $c>0$.
\end{thm}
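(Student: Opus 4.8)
The plan is to fix an optimal fractional matching $w$ of $\mathcal{H}$, set $W:=\sum_{e}w_e=\nu^*(\mathcal{H})$ (which equals $\tau^*(\mathcal{H})$ by LP duality), and prove $W\le\ee\,d\cdot\nu(\mathcal{H})$; I may discard the balls with $w_e=0$, and the case where $G$ has no edge is trivial. To get something concrete to charge $W$ against, I would fix a \emph{maximum} matching $M=\{M_1,\dots,M_k\}$ of $\mathcal{H}$, so $k=\nu(\mathcal{H})$, and label each ball $e$ by the nonempty set $T(e)\subseteq[k]$ of indices $j$ with $e\cap M_j\ne\emptyset$ (nonempty by maximality of $M$). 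The one elementary packing fact I would use is that for every nonempty $S\subseteq[k]$ the sub-family $\mathcal{H}_S:=\{e:T(e)=S\}$ satisfies $\nu(\mathcal{H}_S)\le|S|$: indeed $|S|+1$ pairwise-disjoint balls labelled $S$, together with $\{M_j:j\notin S\}$, would form a matching larger than $M$. Since $\mathcal{H}=\bigsqcup_{S}\mathcal{H}_S$, it suffices to bound $\sum_{S}w(\mathcal{H}_S)$; the singleton labels, for which $\mathcal{H}_{\{j\}}$ is a family of \emph{pairwise-intersecting} balls, are the prototype, and the heart of the argument is to show that an intersecting family $\mathcal{F}$ of balls in $G$ has $\nu^*(\mathcal{F})\le\ee\,d$.

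For the intersecting case I would pick a ball $B_{r_0}(v_0)\in\mathcal{F}$ of minimum radius, so every ball of $\mathcal{F}$ meets the small set $B_{r_0}(v_0)$, and stratify $\mathcal{F}$ into scales according to $\log_q$ of the radius, for a free parameter $q>1$. After contracting $B_{r_0}(v_0)$ to a single vertex $x$, every ball in one scale becomes a connected set incident to $x$; contracting the parts outside $B_{r_0}(v_0)$ of a maximal pairwise-disjoint sub-collection of that scale yields branch sets of a minor of $G$, all incident to $x$, so the hypothesis that every minor of $G$ has average degree at most $d$ limits their number, and feeding in the fractional-matching inequalities at shared vertices (so that balls through a common vertex cannot all be heavy) turns this into a bound of order $d$ on the $w$-weight carried by a single scale. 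Summing over scales gives a bound of the form $d\cdot g(q)$; since $\inf_{q>1}q/\ln q=\ee$, attained at $q=\ee$, optimising the scale base recovers $\nu^*(\mathcal{F})\le\ee\,d$. The remaining balls, those with $|T(e)|\ge 2$, link several of the pairwise-disjoint $M_j$'s; contracting $M_1,\dots,M_k$ to vertices $x_1,\dots,x_k$, such a ball carries a path of the resulting minor among its $x_j$'s, and combining $\nu(\mathcal{H}_S)\le|S|$ with average degree $\le d$ bounds their total weight by $O(dk)$ — with the contributions of the different labels folded into the same scale sums, the overall constant stays $\ee$, yielding $W\le\ee\,d\cdot k=\ee\,d\cdot\nu(\mathcal{H})$. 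The planar and $K_t$-minor-free statements then follow from the bounds $d\le 6$ and (Kostochka, Thomason) $d\le c\,t\sqrt{\log t}$.

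The step I expect to be the real obstacle is the per-scale estimate inside the intersecting-family case: faithfully converting ``fractional weight on balls of comparable radius all meeting a fixed small ball'' into ``branch sets incident to a single vertex of a sparse minor'', so that (i) it is the fractional weight that is controlled rather than the number of balls — the fractional-matching constraints must genuinely forbid many heavy balls from sharing a vertex — and (ii) the estimate is tight enough that optimising the scale base $q$ produces the precise constant $\ee$, rather than merely some absolute constant or an extra logarithmic factor. Everything else (LP duality, the maximum matching, the labels $T(e)$, and the treatment of the large-label balls) should be routine bookkeeping.
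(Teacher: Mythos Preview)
Your strategy diverges from the paper's, and it has genuine gaps at both places you flagged as delicate. For the intersecting case ($\nu=1$): contracting the smallest ball to a vertex $x$ and exhibiting disjoint branch sets adjacent to $x$ produces only a \emph{star} minor, whose average degree is below $2$ regardless of its size, so the hypothesis ``every minor has average degree $\le d$'' yields nothing here; and the number of radius scales is $\Theta(\log(R_{\max}/r_0))$, unbounded in general, so a per-scale $O(d)$ bound cannot sum to $O(d)$ without a geometric decay across scales that you have not supplied. (The identity $\inf_{q>1}q/\ln q=\ee$ is correct but is not connected to any quantity your argument actually produces.) For the reduction via labels: even granting $\nu^*(\cH_S)\le \ee d\,|S|$ for each label $S$, there is no a~priori bound on the number of realised labels, so $\sum_{|S|\ge 2}w(\cH_S)$ is uncontrolled; contracting the $M_j$ to vertices $x_j$ bounds the number of \emph{pairs} $\{i,j\}$ appearing in some label by $O(dk)$, but not the fractional weight carried by the balls realising them---the passage from ``$O(dk)$ edges in a minor on $\{x_1,\dots,x_k\}$'' to ``$O(dk)$ total $w$-weight on balls with $|T(e)|\ge 2$'' is the whole difficulty and is not argued.

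The paper takes a different route and avoids the label decomposition entirely. It writes the optimal fractional matching as a multiset $\mathcal{B}$ of $p$ balls with every vertex lying in at most $q$ of them (so $\nu^*=p/q$), and bounds the average degree of the \emph{intersection graph} $\mathcal{G}$ of $\mathcal{B}$ by $\ee d q$. That bound comes from a random-sampling trick in the style of the Crossing Lemma (Lemma~\ref{lem:hredu}): sample each ball independently with probability $1/q$, keep an edge $\{B_i,B_j\}$ only when its median vertex $x_{ij}$ lies in no other sampled ball, and observe that the surviving graph is a minor of $G$ by Lemma~\ref{lem:minor1}, hence has average degree $\le d$; unwinding the expectations gives $\mathrm{ad}(\mathcal{G})\le \ee d q$, the constant $\ee$ arising from $(1-1/q)^{q-2}>1/\ee$ rather than from any scale optimisation. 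The Caro--Wei (Tur\'an) inequality then yields an independent set in $\mathcal{G}$---that is, an integral matching in $\cH$---of size at least $p/(\ee d q+1)$, and letting $q\to\infty$ gives $\nu(\cH)\ge \nu^*(\cH)/(\ee d)$.
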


We note that results on the VC-dimension of ball hypergraphs in graphs excluding a minor have also been  used recently to obtain improved algorithms for the computation of the diameter in sparse graphs~\cite{DHV20, LP19}.

\smallskip

The proofs of Theorems~\ref{thm:main}, \ref{thm:nearlinear}, and~\ref{thm:frac} are constructive, and can be transformed into efficient algorithms producing transversals (in the case of  Theorems~\ref{thm:main} and \ref{thm:nearlinear}) or matchings (in the case of Theorem~\ref{thm:frac}) of the desired size.

\smallskip

The paper is organized as follows.  
Sections~\ref{sec:struct} and~\ref{sec:tech} are devoted to technical lemmas that will be used in our proofs. 
Theorems~\ref{thm:nearlinear} and~\ref{thm:frac} are proved in Section~\ref{sec:frac}. 
Theorem~\ref{thm:main} is proved in Section~\ref{sec:main}. 
Finally, we conclude the paper in Section~\ref{sec:conclusion} with a construction suggesting that Theorem~\ref{thm:main} does not extend way beyond proper minor-closed classes.

\section{Hypergraphs, balls, and minors}\label{sec:struct}

We will need two technical lemmas, whose proofs are very similar to the proof of~\cite[Theorem 4]{BoTh} and~\cite[Proposition 1]{CEV07}. We start with Lemma~\ref{lem:minor1}, which will be used in the proofs of Theorem~\ref{thm:frac} and Theorem~\ref{thm:main}. We first need the following definitions.

We say that two balls $B_1$ and $B_2$ are \emph{comparable} if  $B_1 \subset B_2$ or $B_2 \subset B_1$ (where $\subset$ denotes the strict inclusion), and otherwise they
are \emph{incomparable}. Note in particular that if $B_1=B_2$ then $B_1$ and $B_2$ are incomparable. Consider two intersecting and incomparable balls $B_1=B_{r_1}(v_1)$ and $B_2=B_{r_2}(v_2)$ in a graph $G$, and let $d:=d_G(v_1,v_2)$. A \emph{median vertex} of $B_1$ and $B_2$ is any vertex $u$ lying on a shortest path between $v_1$ and $v_2$, at distance $\lfloor\frac{r_1-r_2+d}{2}\rfloor$ from $v_1$ and at distance $\lceil\frac{r_2-r_1+d}{2}\rceil$ from $v_2$, or symmetrically at distance $\lceil\frac{r_1-r_2+d}{2}\rceil$ from $v_1$ and at distance $\lfloor\frac{r_2-r_1+d}{2}\rfloor$ from $v_2$. Since $B_1$ and $B_2$ intersect, we have $r_1+r_2\ge d$  and since $B_1$ and $B_2$ are incomparable, we have $r_2\le r_1+d$ and  $r_1\le r_2+d$, and in particular $\lfloor\frac{r_1-r_2+d}{2}\rfloor\ge 0$ and $\lceil\frac{r_2-r_1+d}{2}\rceil\ge 0$ (so the distances above are well defined). 
Moreover, $\lfloor\frac{r_1-r_2+d}{2}\rfloor = \lfloor\frac{2r_1-r_1-r_2+d}{2}\rfloor \le r_1$ and $\lceil\frac{r_2-r_1+d}{2}\rceil = \lceil\frac{2r_2-r_1-r_2+d}{2}\rceil \le r_2$, so any median vertex of $B_1$ and $B_2$ lies in $B_1\cap B_2$. Finally, note that by the definition of a median vertex $u$ of $B_1$ and $B_2$,

\begin{itemize}
\item for every $\{i,j\}=\{1,2\}$  we have $r_j-d(v_j,u)\le r_i-d(v_i,u)+1$, and
\item if  $v_1= v_2$ (which implies $B_1=B_2$ since the balls are incomparable), then $u=v_1=v_2$.
\end{itemize}

\medskip

\begin{lemma}~\label{lem:both1}
\label{lem:minor1}
Let $G$ be a graph, let $S=\{B_i = B_{r_i}(s_i)\}_{i\in [n]}$ be a set of  $n$ pairwise incomparable balls in $G$, with pairwise distinct centers, and let $E_S \subseteq \binom{S}{2}$ be a subset of pairs of intersecting balls $\{B_i,B_j\} \subseteq S$, each of which is associated with a median vertex $x_{\{i,j\}}$ of $B_i$ and $B_j$, and such that the only balls of $S$ containing $x_{\{i,j\}}$ are $B_i$ and $B_j$. Then the graph $H=(S,E_S)$ is a minor of $G$. 
\end{lemma}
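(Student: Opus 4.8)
The plan is to build the minor model of $H=(S,E_S)$ inside $G$ explicitly, by assigning to each vertex $B_i\in S$ a connected subgraph (branch set) of $G$, and then checking that (a) these branch sets are pairwise disjoint, and (b) whenever $\{B_i,B_j\}\in E_S$, there is an edge of $G$ between the corresponding branch sets.

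\medskip

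The natural candidate for the branch set of $B_i=B_{r_i}(s_i)$ is a subtree of a BFS-type shortest-path structure rooted at $s_i$. Concretely, I would fix, for each pair $\{i,j\}\in E_S$, a shortest $s_i$–$s_j$ path $P_{ij}$ in $G$ passing through the prescribed median vertex $x_{\{i,j\}}$, split at $x_{\{i,j\}}$ into the subpath $P_{ij}^i$ from $s_i$ to $x_{\{i,j\}}$ (inclusive of $s_i$, exclusive of $x_{\{i,j\}}$) and the subpath $P_{ij}^j$ from $s_j$ to $x_{\{i,j\}}$ (again exclusive of the median). Then set the branch set $T_i$ to be the union of $\{s_i\}$ with all the half-paths $P_{ij}^i$ over all $j$ with $\{i,j\}\in E_S$. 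Each $T_i$ is connected since every half-path contains $s_i$, and for each edge $\{i,j\}\in E_S$ the vertex $x_{\{i,j\}}$ is adjacent to both $T_i$ and $T_j$ (its neighbours on $P_{ij}$). One then either absorbs each median $x_{\{i,j\}}$ into one of the two branch sets, or — cleaner — observes that we may contract the median vertices last; the standard way is to note that since $x_{\{i,j\}}$ lies in no ball of $S$ other than $B_i,B_j$, it is not a center $s_k$ and (crucially) will turn out to lie in no other branch set $T_k$, so we can safely add it to, say, $T_i$, making $T_i$ and $T_j$ adjacent.

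\medskip

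The heart of the argument — and the step I expect to be the main obstacle — is proving that the $T_i$'s (together with their absorbed medians) are pairwise \emph{disjoint}. Suppose a vertex $u$ lies on both a half-path $P_{ij}^i\subseteq T_i$ and a half-path $P_{k\ell}^k\subseteq T_k$ with $i\neq k$. Using that $u$ is on a shortest path from $s_i$ to the median $x_{\{i,j\}}$, one gets $d(s_i,u)\le r_i$, in fact with the sharper bookkeeping coming from the median-vertex inequality $r_j-d(s_j,x_{\{i,j\}})\le r_i - d(s_i,x_{\{i,j\}})+1$ recorded just before the lemma; similarly $d(s_k,u)\le r_k$, so $u\in B_i\cap B_k$. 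Now I want to contradict the hypothesis that the only balls containing a median are its own two. The idea is to slide along the path $P_{ij}$ from $u$ towards $x_{\{i,j\}}$ (and likewise towards $x_{\{k,\ell\}}$ along $P_{k\ell}$) and argue that the relevant median vertex — say $x_{\{i,j\}}$, which lies "beyond" $u$ on the way from $s_i$ — is still within distance $r_k$ of $s_k$, i.e. $x_{\{i,j\}}\in B_k$; since $k\notin\{i,j\}$ this contradicts the defining property of $E_S$. Making "$x_{\{i,j\}}\in B_k$" precise is where the median inequalities and the incomparability of the balls get used: one shows $d(s_k, x_{\{i,j\}}) \le d(s_k,u) + d(u,x_{\{i,j\}}) \le r_k + (\text{something controlled by }r_i - d(s_i,u))$, and then trades the surplus on the $i$-side against the slack $r_i - d(s_i,u)\ge d(u,x_{\{i,j\}})$, possibly losing an additive $1$ that is absorbed by the $+1$ in the median inequality, or by a symmetric choice of which side's median to push. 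The case analysis on which of $i,j,k,\ell$ coincide (note $j$ could equal $k$, etc.) and on the two symmetric definitions of a median vertex is the fiddly part, but it is exactly parallel to the proof of~\cite[Theorem 4]{BoTh} and~\cite[Proposition 1]{CEV07}, which is why the lemma is stated as a routine adaptation. Once disjointness is established, the branch sets $\{T_i\}_{i\in[n]}$ realize $H$ as a minor of $G$, completing the proof.
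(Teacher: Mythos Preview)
Your overall strategy --- branch sets built from shortest-path half-paths to the medians, then proving them pairwise disjoint --- is precisely the paper's approach. The paper phrases it slightly differently (it keeps the medians in its trees $\mathcal{T}_i$, shows that distinct trees meet only at medians, and that each median is a leaf in at least one of its two trees), but this is equivalent to your open-half-path formulation once you absorb each median into one side.

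The gap is in your sketch of the disjointness argument. The inequality chain you write, bounding $d(s_k,x_{\{i,j\}})$ by ``$r_k + (\text{something controlled by } r_i-d(s_i,u))$'' and then ``trading surplus against slack'', does not close: from $d(s_k,u)\le r_k$ and $d(u,x_{\{i,j\}})\ge 0$ you only get $d(s_k,x_{\{i,j\}})\le r_k + d(u,x_{\{i,j\}})$, and nothing on the $i$-side cancels that extra term. The move that actually works (your ``symmetric choice of which side's median to push'') is to assume without loss of generality $d(u,x_{\{i,j\}})\le d(u,x_{\{k,\ell\}})$ and write
\[
d(s_k,x_{\{i,j\}})\le d(s_k,u)+d(u,x_{\{k,\ell\}}) = d(s_k,x_{\{k,\ell\}})\le r_k,
\]
so that $x_{\{i,j\}}\in B_k$, forcing $k\in\{i,j\}$ and hence $k=j$. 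But now you are \emph{not} done, and this is where the proposal genuinely falls short: when $k=j$ and $\ell\neq i$ there is no contradiction from ``$x_{\{i,j\}}\in B_k$'', and it is precisely here --- not in the previous step --- that the median inequality $r_j-d(s_j,x_{\{i,j\}})\le r_i-d(s_i,x_{\{i,j\}})+1$ must be invoked. The paper uses it to obtain $d(s_i,x_{\{j,\ell\}})\le r_i+1-2\,d(u,x_{\{i,j\}})$, which together with $x_{\{j,\ell\}}\notin B_i$ forces $u=x_{\{i,j\}}$, placing $u$ outside your open half-path $P_{ij}^i$. You defer this entire case to ``exactly parallel to~\cite{BoTh} and~\cite{CEV07}'', but it is the heart of the lemma and is not delivered by the bookkeeping you wrote down.
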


\begin{proof}
Let us fix a total ordering $\prec$ on the vertices of $G$. In the proof, all distances are in the graph $G$, so we write $d(u,v)$ instead of $d_G(u,v)$ for the sake of readability.
For every pair of balls $\{B_i,B_j\} \in E_S$, we write $x_{ij}$ or $x_{ji}$ instead of $x_{\{i,j\}}$, for the sake of readability ($x_{ij}$, $x_{ji}$, and $x_{\{i,j\}}$ all correspond to the same median vertex of $B_i$ and $B_j$). We also let $P(s_i, x_{ij})$ be a shortest path from $s_i$ to $x_{ij}$, and we assume that the sequence of vertices from $s_i$ to $x_{ij}$ on the path is minimum with respect to the lexicographic order induced by $\prec$ (among all shortest paths from $s_i$ to $x_{ij}$). By the assumptions, we know that $P_{ij} \coloneqq P(s_i, x_{ij})  \cup P(s_j, x_{ij}) $ is a shortest path from $s_i$ to $s_j$. 

For every $i\in [n]$, we define
\[
\cT_i \coloneqq \bigcup_{j\,:\,\{B_i,B_j\} \in E_S} P(s_i, x_{ij}) .\]

\paragraph{\bf Claim 1.} For every $i\in [n]$, $\cT_i$ is a tree.

\medskip

\noindent Assume for the sake of contradiction that there is a cycle $C$ in $\cT_i$. 
Observe that, by construction, if $uv$ is an edge of $\cT_i$ then $|d(s_i,u) - d(s_i,v)|=1$. 
Let $y$ be a vertex of $C$ maximizing $d(s_i,y)$, and let $z_1,z_2$ denote its two neighbors in $C$. 
Then $d(s_i,z_1)=d(s_i,z_2)=d(s_i,y)-1$, and there exist $j_1,j_2$ such that $z_1y$ is an edge of $P(s_i,x_{ij_1})$ and $z_2y$ is an edge of $P(s_i,x_{ij_2})$. 
Let $P_1$ and $P_2$ be the subpaths from $s_i$ to $y$ of $P(s_i,x_{ij_1})$ and $P(s_i,x_{ij_2})$, respectively. Then $P_1$ and $P_2$ are two different paths from $s_i$ to $y$, and one of them is not minimum either in terms of length, or with respect to the lexicographic order induced by $\prec$. This contradicts the definition of $P(s_i,x_{ij_1})$ and $P(s_i,x_{ij_2})$.

\medskip

\paragraph{\bf Claim 2.}
For every two pairs of balls $\{B_i,B_k\},\{B_j,B_\ell\} \in E_S$ with $i\ne j$, if $P(s_i,x_{ik})$ and $P(s_j,x_{j\ell})$ intersect in some vertex $y$ such that $d(y,x_{ik}) \le d(y,x_{j\ell })$, then $j=k$ and $y=x_{ij}$.

\medskip

\noindent 
Note that $d(s_j,x_{ik})\le d(s_j,y) + d(y,x_{ik}) \le d(s_j,y) + d(y,x_{j\ell }) = d(s_j,x_{j\ell}) $.
Since $x_{j\ell}$ is a median vertex of $B_j$ and $B_\ell$, we have $d(s_j,x_{j\ell}) \le  r_j$, which implies that $d(s_j,x_{ik})\le r_j$ and thus $x_{ik} \in B_j$. 
By definition, $x_{ik}$ is only contained in the balls $B_i$ and $B_k$ of $S$ and thus $j=k$. If we also have $i=\ell$, then necessarily $y=x_{ij}$. 

From now on, we assume that $i \neq \ell$.
Since $P_{ij} = P(s_i, x_{ij})  \cup P(s_j, x_{ij}) $ is a shortest path containing the vertex $y$, the $s_j$--$y$ section of that path (which contains $x_{ij}$) has the same length as the $s_j$--$y$ section of $P(s_j,x_{j\ell})$. 
Replacing the latter section by the former, we obtain a shortest path from $s_j$ to $x_{j\ell}$ containing $x_{ij}$, which we denote $Q(s_j,x_{j\ell})$. 
As a consequence,
\[
  d(x_{j\ell}, x_{ij})=d(x_{j\ell},s_j)-d(s_j,x_{ij})\le r_j-d(s_j,x_{ij})\le r_i-d(s_i,x_{ij})+1, 
\]
where the last inequality follows from the definition of $x_{ij}$. 
We now use the fact that $y$ appears on the path $P(s_i,x_{ij})$ and on the $x_{ij}$--$x_{j\ell}$ section of $Q(s_j,x_{j\ell})$, and obtain
\begin{align*}
d(s_i,x_{j\ell}) &\le d(s_i,y) + d(y,x_{j\ell}) = d(s_i,x_{ij}) + d(x_{ij},x_{j\ell}) - 2 d(y,x_{ij}) \le r_i + 1 - 2 d(y,x_{ij}).
\end{align*}
Since $x_{j\ell} \notin B_i$ by definition (and so $d(s_i,x_{j\ell})>r_i$), this implies that $y=x_{ij}$, as desired.

%

\medskip

This claim immediately implies that for every $i,j\in [n]$ with $i \neq j$, we have $V(\cT_i) \cap V(\cT_j)=\{x_{ij}\}$ if $\{B_i,B_j\} \in E_S$, and $V(\cT_i) \cap V(\cT_j)=\emptyset$ otherwise. Another consequence is that for every $\{B_i,B_j\} \in E_S$, the vertex $x_{ij}$ is a leaf in at least one of the two trees $\cT_i$ and $\cT_j$ (since otherwise there exist $k \neq j$ and $\ell \neq i$ such that $x_{ij} \in P(s_i,x_{ik})$ and $x_{ij} \in P(s_j,x_{j \ell})$, which readily contradicts Claim 2 above).

In the subgraph $\bigcup_{i\in [n]}\cT_i$ of $G$, for each $i\in [n]$ we contract each edge of $\cT_i$ except the ones incident to a leaf of $\cT_i$. It follows from the paragraph above that the resulting graph is precisely a graph obtained from $H=(S,E_S)$ by subdividing each edge at most once, and thus $H$ is a minor of $G$. 
\end{proof}

The next result has a very similar proof\footnote{Despite our best effort, we have not been able to prove the two results at once in a satisfactory way, i.e.\ with a proof that would be both readable and shorter than the concatenation of the two existing proofs.}, but the setting is slightly different. It will be used in the proof of Theorem~\ref{thm:main}.

\begin{lemma}
\label{lem:minor2}
Let $G$ be a graph and $S=\{B_i = B_{r_i}(s_i)\}_{i\in [n]}$ be a set of $n$ pairwise vertex-disjoint  balls in $G$, and let $E_S \subseteq \binom{S}{2}$ be a subset of pairs of balls $\{B_i,B_j\} \subseteq S$, each of which is associated with a ball $B_{\{i,j\}} \notin S$ of $G$ which intersects only $B_i$ and $B_j$ in $S$. Then the graph $H=(S,E_S)$ is a minor of $G$. 
\end{lemma}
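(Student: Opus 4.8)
The plan is to mimic the proof of Lemma~\ref{lem:minor1} closely, adapting it to the new setting where the balls $B_i$ are pairwise vertex-disjoint and the ``connector'' objects $B_{\{i,j\}}$ are balls (not single vertices) meeting only $B_i$ and $B_j$ among $S$. The overall strategy is: for each pair $\{B_i,B_j\}\in E_S$, pick a convenient ``meeting vertex'' $x_{ij}$ inside $B_{\{i,j\}}$ together with shortest paths from this vertex into $B_i$ and into $B_j$ respectively; build, for each $i$, a tree $\cT_i$ from the union of these paths (plus, if needed, a spanning tree of $B_i$ itself so that $\cT_i$ is connected and the path-endpoints all land in $B_i$); argue that distinct trees $\cT_i,\cT_j$ are vertex-disjoint unless $\{B_i,B_j\}\in E_S$, in which case they meet in exactly one well-controlled place; and finally contract each $\cT_i$ to a single vertex, after possibly subdividing the ``bridge'' edges at most once, to exhibit $H=(S,E_S)$ as a minor of $G$.

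Concretely, first I would set up the combinatorial data. Fix a total order $\prec$ on $V(G)$. For $\{B_i,B_j\}\in E_S$, write $B_{\{i,j\}}=B_{\rho_{ij}}(c_{ij})$. Since $B_{\{i,j\}}$ intersects $B_i$, choose a vertex $a_{ij}\in B_{\{i,j\}}\cap B_i$, and similarly $a_{ji}\in B_{\{i,j\}}\cap B_j$; let $x_{ij}$ be a median-type vertex on a shortest $a_{ij}$--$a_{ji}$ path, or more simply just take the two paths $P(a_{ij},c_{ij})$ and $P(c_{ij},a_{ji})$ through the center $c_{ij}$, chosen lexicographically-minimal with respect to $\prec$ among shortest paths. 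The key length bound to extract is that $d(s_i,\cdot)$ stays bounded by $r_i$ on the part of these paths lying in $B_i$, and crucially that no vertex on the $B_i$-side path beyond $B_i$ can lie in a third ball $B_k$: here one uses that $B_{\{i,j\}}$ meets only $B_i$ and $B_j$ among $S$, together with the fact that the relevant path segment stays inside $B_{\{i,j\}}$ (a shortest path from a point of a ball to its center stays in the ball). Then define $\cT_i$ as the union over $j$ with $\{B_i,B_j\}\in E_S$ of the segment of $P_{ij}$ from the last vertex it shares with $B_i$ out to $x_{ij}$, together with a fixed spanning tree (e.g.\ BFS tree from $s_i$) of $G[B_i]$ to glue everything together inside $B_i$.

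Next, I would prove the two claims analogous to Claim~1 and Claim~2 of Lemma~\ref{lem:minor1}: that each $\cT_i$ is a tree (the portion inside $B_i$ is a tree by choice of a spanning tree; the portions outside $B_i$ are pendant subtrees attached at distinct exit vertices, and acyclicity follows from lexicographic-minimality of the chosen shortest paths, exactly as in Claim~1), and that two trees $\cT_i,\cT_j$ with $i\neq j$ can only intersect at a vertex forced to be ``on both bridges,'' which happens precisely when $\{B_i,B_j\}\in E_S$ and then at a single controlled vertex. The intersection argument again runs as in Claim~2: if $\cT_i$ and $\cT_j$ meet at a vertex $y$ outside both $B_i$ and $B_j$, then $y$ lies on some bridge from $B_i$ (hence in some $B_{\{i,k\}}$) and on some bridge from $B_j$ (hence in some $B_{\{j,\ell\}}$); one shows $y\in B_{\{i,k\}}\cap B_j$ forces $k=j$ (since $B_{\{i,k\}}$ meets only $B_i$ and $B_k$ in $S$), symmetrically $\ell=i$, and then a shortest-path exchange argument pins $y$ to the distinguished vertex $x_{ij}$. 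Meetings inside $B_i$ are impossible since the $B_j$'s are disjoint from $B_i$. Finally, contract each $\cT_i$: the bridges become edges possibly subdivided once (by the vertex $x_{ij}$), so $H$ is obtained from $G$ by deletions and contractions, i.e.\ $H$ is a minor of $G$.

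I expect the main obstacle to be the bookkeeping in the ``meetings outside both balls'' part of the Claim~2 analogue, specifically making the shortest-path exchange argument go through cleanly when the connector is a ball of possibly large radius rather than a single median vertex: one must verify that the segment of $P_{ij}$ from $s_i$ (or from the $B_i$-exit vertex) through $x_{ij}$ is still a \emph{shortest} path, so that the exchange used to derive a contradiction with $x_{j\ell}\notin B_i$ remains valid. This requires choosing $x_{ij}$ and the incident path segments carefully — most safely by picking a genuine geodesic from a point of $B_i\cap B_{\{i,j\}}$ to a point of $B_j\cap B_{\{i,j\}}$ and taking $x_{ij}$ to be a suitable midpoint of it — and then redoing the ``$r_j - d(s_j,x_{ij}) \le r_i - d(s_i,x_{ij}) + O(1)$'' type inequality with the ball radii $\rho_{ij}$ entering as an additive slack; the constant hidden in the subdivision (``at most once'' versus ``a bounded number of times'') may need to be adjusted accordingly, but this does not affect the conclusion that $H$ is a minor.
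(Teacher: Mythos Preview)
Your overall plan mirrors the paper's: build a tree $\cT_i$ for each ball, prove analogues of Claims~1 and~2, then contract. The gap is in the technical choices that make Claim~2 go through. The paper does two things you do not. First, it builds $\cT_i$ as the union of lex-minimal shortest paths \emph{from the center $s_i$} to the center $x_{ij}$ of $B_{\{i,j\}}$ --- no spanning tree of $B_i$, no boundary points $a_{ij}$. Second, and crucially, it first replaces each $B_{\{i,j\}}$ by a ball of \emph{minimal radius} $r_{ij}$ among all balls intersecting only $B_i$ and $B_j$ in $S$. This minimality is the engine of the argument: it forces $P(s_i,x_{ij})\cap P(s_j,x_{ij})=\{x_{ij}\}$, it pins $d(s_i,x_{ij})$ to within~$1$ of $r_i+r_{ij}$ (yielding marker vertices $z_{i,ij}$ at distance exactly $r_i$ from $s_i$, which play the role that median vertices played in Lemma~\ref{lem:minor1}), and --- decisively --- it closes Claim~2. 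Once the exchange argument shows that a common vertex $y$ lies on shortest $s_i$--$x_{ij}$ and $s_j$--$x_{ij}$ paths, one concludes $y=x_{ij}$ because otherwise replacing $x_{ij}$ by $y$ would strictly decrease $r_{ij}$, contradicting minimality. Your sketch has no analogue of this extremal step; the inequality you propose (``$r_j-d(s_j,x_{ij})\le r_i-d(s_i,x_{ij})+O(1)$ with $\rho_{ij}$ as slack'') does not by itself pin $y$ down, and allowing more subdivisions does not help, since the issue is uniqueness of the intersection vertex, not the length of the bridge.

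Your spanning-tree-plus-bridges construction also runs into trouble before Claim~2: the $\cT_j$-bridge toward $B_i$ is a path in $B_{\{i,j\}}$ that may meet $B_i$ in several vertices, all of which belong to your spanning tree of $B_i\subseteq\cT_i$; so $\cT_i\cap\cT_j$ need not be a single vertex even in the good case $\{B_i,B_j\}\in E_S$. (Similarly, ``the last vertex of $P_{ij}$ in $B_i$'' is delicate, since a shortest $a_{ij}$--$c_{ij}$ path can exit and re-enter $B_i$.) Working with global shortest paths from $s_i$ avoids all of this: the portion of $P(s_i,x_{ij})$ beyond distance $r_i$ automatically lies in $B_{\{i,j\}}$, Claim~1 is literally the same as in Lemma~\ref{lem:minor1}, and the single-vertex intersection in Claim~2 then falls out of the minimality of $r_{ij}$.
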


\begin{proof}
  Let us fix a total ordering $\prec$ on the vertices of $G$. As before, all distances are in the graph $G$, and we write $d(u,v)$ instead of $d_G(u,v)$. For every $\{B_i,B_j\} \in E_S$ we write $B_{ij}$ or $B_{ji}$ interchangeably for $B_{\{i,j\}}$, and we denote by $x_{ij}$ the center of the ball $B_{ij}$, and by $r_{ij}$ its radius ($x_{ij}=x_{ji}$ and $r_{ij}=r_{ji}$). We can assume that the centers $x_{ij}$ are chosen so that the radii $r_{ij}$ are minimal (among all balls of $G$ not in $S$ that intersect only $B_i$ and $B_j$ in $S$).

  We let $P(s_i,x_{ij})$ be the shortest path from $s_i$ to $x_{ij}$ which minimizes the sequence of vertices from $s_i$ to $x_{ij}$ with respect to the lexicographic ordering induced by $\prec$ (among all shortest paths from $s_i$ to $x_{ij}$). Observe that $P(s_i,x_{ij})$ and $P(s_j,x_{ij})$ only intersect in $x_{ij}$ (if not, we could replace $x_{ij}$ by a vertex that is closer to $s_i$ and $s_j$ and reduce the radius $r_{ij}$ accordingly -- the new ball $B_{ij}$ would still intersect $B_i$ and $B_j$, and no other ball of $S$, and this would contradict the minimality of $r_{ij}$). We may also assume that $r_i+r_{ij}-1\le d(s_i,x_{ij})\le r_i+r_{ij}$ (otherwise we could replace $x_{ij}$ by its neighbor on $P(s_j,x_{ij})$ and decrease $r_{ij}$ by 1).

For every $i\in [n]$, we define
\[
  \cT_i \coloneqq \bigcup_{j\,:\,\{B_i,B_j\} \in E_S} P(s_i,x_{ij}).\]

\medskip

\paragraph{\bf Claim 1.} For every $i\in [n]$, $\cT_i$ is a tree.

\smallskip

\noindent The proof is exactly the same as that of Claim 1 in the proof of Lemma~\ref{lem:minor1} (we do not repeat it here).

\medskip

On the path $P(s_i,x_{ij})$, we let $z_{i ,i j}$ be the vertex at distance $r_i$ from $s_i$ (and  since $x_{ij}=x_{ji}$ we use  $z_{i , ij}$ and $z_{i , ji}$ interchangeably).
As we assumed above that $r_i+r_{ij}-1\le d(s_i,x_{ij})\le r_i+r_{ij}$, we also have 
$r_{ij}-1 \le  d(x_{ij},z_{i,i j})\le r_{ij}$.
In particular,  $ d(x_{ij},z_{j,ij})-1 \le  d(x_{ij},z_{i,i j})\le  d(x_{ij}, z_{j, ij})+1$.

\medskip

\paragraph{\bf Claim 2.} For every two pairs of balls $\{B_i,B_k\},\{B_j,B_\ell\} \in E_S$, with $i\ne j$, if $P(s_i,x_{ik})$ and $P(s_j,x_{j\ell})$ intersect in some vertex $y$ such that $d(y,z_{i ,i k}) \le d(y,z_{j,j \ell})$, then $i=\ell$ and $y=x_{ij}$.

\medskip

We first argue that $y$ appears after $z_{j ,j\ell}$ when traversing $P(s_j,x_{j\ell})$ from $s_j$ to $x_{j\ell}$. Indeed, otherwise we would have
\[ d(s_j,z_{i ,i k}) \le  d(s_j,y) + d(y,z_{i ,i k}) \le d(s_j,y) + d(y,z_{j ,j \ell}) = d(s_j,z_{j ,j \ell})= r_j,\]
which means that $B_i$ and $B_j$ intersect, contradicting the assumptions that $i\neq j$ and all balls in $S$ are vertex-disjoint. 
So $y$ lies on the $z_{j,j\ell}$--$x_{j\ell}$ section of $P(s_j,x_{j\ell})$, and we infer that
\[ d(x_{j\ell},z_{i ,i k}) \le  d(x_{j\ell},y) + d(y,z_{i ,i k})\le d(x_{j\ell},y) + d(y,z_{j ,j \ell}) = d(x_{j\ell},z_{j ,j \ell})\le r_{j\ell}.
\]
It follows that the ball $B_{j\ell}$ intersects the ball $B_i$. By the assumption, this means that $i=\ell$, and thus $s_\ell=s_i$ and $z_{j ,j \ell}=z_{j , ij}$. 
We now argue that $y$ lies in the $z_{i,ik}$--$x_{ik}$ section of $P(s_i, x_{ik})$. 
Suppose for a contradiction that $y$ appears strictly before $z_{i,ik}$ when traversing $P(s_i,x_{ik})$ from $s_i$ to $x_{ik}$.
By definition of $z_{i,ik}$, it then follows that $d(s_i, y) \leq r_i-1$. On the other hand 
$$d(s_j,y) = d(s_j,x_{ij})- d(y,x_{ij}) = d(s_j,z_{j,ij}) + d(z_{j,ij},x_{ij})-d(y,x_{ij}).$$

Note that $r_i+r_{ij}-1\le d(s_i,x_{ij})\le d(s_i,y)+d(y,x_{ij})\le r_i-1 +d(y,x_{ij})$, and thus $d(y,x_{ij})\ge r_{ij}\ge d(z_{i,ij},x_{ij})$. We obtain that  $d(z_{j,ij},x_{ij}) \leq d(z_{i,ij},x_{ij})+1 \leq d(y, x_{ij})+1$, and it follows that $d(s_j, y) \leq d(s_j,z_{j,ij})+1 = r_j+1$. 
Hence $d(s_i,s_j) \leq d(s_i,y) + d(y, s_j) \leq r_i+r_j$, so $B_i$ and $B_j$ intersect, a contradiction. 
We conclude that $y$ lies in the $z_{i,ik}$--$x_{ik}$ section of $P(s_i, x_{ik})$, and thus $d(x_{ik},y)+d(y, z_{i ,i k})= d(x_{ik}, z_{i,ik})$.

Recall that by the initial assumption of the claim, combined with $i=\ell$, we have $d(y,z_{i ,i k}) \le d(y,z_{j , ij})$. Assume first that $d(y,z_{i ,i k}) =d(y,z_{j , ij})$. Then
\[d(x_{ik},z_{j , ij})\le d(x_{ik},y)+d(y, z_{j , ij}) = d(x_{ik},y)+d(y, z_{i ,i k})\le r_{ik},
\]
which implies that $B_j$ intersects $B_{ik}$. 
Thus $j=k$, $P(s_i,x_{ik})=P(s_i,x_{ij})$, and $P(s_j,x_{j\ell})=P(s_j,x_{ij})$. Since these two paths have only $x_{ij}$ in common, in this case we conclude that $y=x_{ij}$.
We can now assume that $d(y,z_{i ,i k}) \le d(y,z_{j , ij}) -1$. Recall that by definition of $x_{ij}$, we have $d(x_{ij},z_{i ,i j})\ge d(x_{ij},z_{j , ij})-1$, which implies that
\[
d(y,z_{i ,i k})+d(y,x_{ij})\le d(y,z_{j , ij})-1+d(y,x_{ij})= d(z_{j , ij},x_{ij})-1\le d(z_{i , ij},x_{ij}).
\]

Since $z_{i,ik}$ and $z_{i,ij}$ are both at distance $r_i$ from $s_i$ and $P(s_i,x_{ij})$ is a shortest path from $s_i$ to $x_{ij}$, it follows that the concatenation of the $s_i$--$y$ section of $P(s_i,x_{ik})$ and the $y$--$x_{ij}$ section of $P(s_j,x_{ij})$ is a shortest path from $s_i$ to $x_{ij}$ (containing $y$). As $y$ is also on a shortest path from $s_j$ to $x_{ij}$, if we had $d(y,x_{ij})>0$, then we could replace $x_{ij}$ by $y$ and reduce $r_{ij}$ to $r_{ij}-d(y,x_{ij})$ ($B_{ij}$ would still intersect $B_i$ and $B_j$ and only these balls of $S$), which would contradict the minimality of $r_{ij}$. It follows that $y=x_{ij}$, as desired.

\medskip

As in the proof of Lemma~\ref{lem:minor1}, the claim implies that for $i \neq j\in [n]$, $\cT_i \cap \cT_j=\{x_{ij}\}$ if $\{B_i,B_j\} \in E_S$, and otherwise the trees $\cT_i$ and $\cT_j$ are vertex-disjoint. Another direct consequence is that for every $\{B_i,B_j\}\in E_S$, the vertex $x_{ij}$ is a leaf in at least one of the two trees $\cT_i$ and $\cT_j$. As before, we can contract the edges of each tree $\cT_i$ not incident to a leaf of $\cT_i$, and the resulting graph is precisely a graph obtained from $H=(S,E_S)$ by subdividing each edge at most once, and thus $H$ is a minor of $G$.
\end{proof}


\section{Hypergraphs and density}\label{sec:tech}

A \emph{partial hypergraph} of $\cH$ is a hypergraph obtained from $\cH$ by removing a (possibly empty) subset of the edges. In addition to hypergraphs, it will also be convenient to consider \emph{multi-hypergraphs}, i.e.\ hypergraphs $\cH=(V,\mathcal{E})$ where $\mathcal{E}$ is a \emph{multiset} of edges.
The \emph{rank} of a hypergraph or multi-hypergraph $\cH$ is the maximum cardinality of an edge of $\cH$.

\medskip

We start with a useful tool, inspired by~\cite{FP10} (see also~\cite{CEM16}), itself inspired by the Crossing lemma.  
Given a graph $G=(V,E)$, we denote by $\mathrm{ad}(G)$ the average degree of $G$, that is $\mathrm{ad}(G)=2|E|/|V|$.

\begin{lemma}\label{lem:hredu}
  Let $\cH=(V,\mathcal{E})$ be a multi-hypergraph of rank at most $k\ge 2$
on $n$ vertices, and
let $E\subseteq {V\choose 2}$ be a set of pairs of vertices $\{u,v\}$
of $V$ such that there exists an edge $e_{uv}$ of $\cH$ containing $u$
and $v$. (Note that we allow that $e_{uv}=e_{xy}$ for two different
pairs $\{u,v\}$ and $\{x,y\}$.) Then the graph $(V,E)$ contains a subgraph $H$
such that $\mathrm{ad}(H)\ge \tfrac{2|E|}{n\ee k}$ and for every edge
$uv$ of $H$, the corresponding edge $e_{uv}$ of $\cH$ contains no
vertex from $V(H)-\{u,v\}$.
\end{lemma}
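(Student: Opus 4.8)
The plan is to mimic the probabilistic-deletion argument behind the Crossing Lemma, as done in~\cite{FP10}. Build an auxiliary graph $G_0=(V,E)$, and for each pair $\{u,v\}\in E$ fix one witnessing edge $e_{uv}\in\mathcal{E}$ of $\cH$ containing both $u$ and $v$. Sample a random subset $W\subseteq V$ by including each vertex independently with probability $p$ (to be chosen later). Let $G_1$ be the subgraph of $G_0$ induced by $W$, and then obtain $G_2$ from $G_1$ by deleting, for every surviving pair $\{u,v\}\in E(G_1)$, the edge $uv$ whenever $e_{uv}$ contains some vertex of $W\setminus\{u,v\}$. By construction, in $G_2$ every remaining edge $uv$ has its witness $e_{uv}$ disjoint from $V(G_2)\setminus\{u,v\}$, so any subgraph $H$ of $G_2$ satisfies the second requirement of the lemma automatically. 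It remains to show $G_2$ has a subgraph of the required average degree; since average degree is monotone under taking the densest subgraph, it suffices to bound $\esp{|E(G_2)|}$ from below and $\esp{|V(G_2)|}=pn$, and then pick an outcome where $|E(G_2)|\ge \esp{|E(G_2)|}$ and pass to a subgraph on that many vertices.

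The key computation is the expectations. Clearly $\esp{|W|}=pn$. For a fixed pair $\{u,v\}\in E$, the edge $uv$ survives into $G_2$ exactly when $u,v\in W$ and no vertex of $e_{uv}\setminus\{u,v\}$ lies in $W$; since $|e_{uv}|\le k$, there are at most $k-2$ such forbidden vertices, so this probability is at least $p^2(1-p)^{k-2}$. Hence $\esp{|E(G_2)|}\ge |E|\,p^2(1-p)^{k-2}$. Therefore there is an outcome with $|V(G_2)|\le pn$... — more carefully, I would argue as follows: the random variable $|E(G_2)|-\tfrac{2|E|p^2(1-p)^{k-2}}{pn}\cdot|V(G_2)|$ has expectation $|E|p^2(1-p)^{k-2}-\tfrac{2|E|p^2(1-p)^{k-2}}{pn}\cdot pn=0$, so some outcome gives $|E(G_2)|\ge \tfrac{2|E|p^2(1-p)^{k-2}}{pn}|V(G_2)|$, i.e.\ $G_2$ itself (on this outcome) has average degree at least $2|E|p^2(1-p)^{k-2}/(pn)=\tfrac{2|E|}{n}\,p(1-p)^{k-2}$. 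If $V(G_2)=\emptyset$ this is vacuous, but then $|E(G_2)|=0$ forces $|E|p^2(1-p)^{k-2}=0$, impossible unless $E=\emptyset$, in which case the statement is trivial; so we may assume $V(G_2)\ne\emptyset$ and take $H=G_2$.

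Finally, optimize the choice of $p$. The function $p(1-p)^{k-2}$ is maximized at $p=1/(k-1)$, giving value $\tfrac{1}{k-1}\bigl(1-\tfrac1{k-1}\bigr)^{k-2}=\tfrac1{k-1}\bigl(\tfrac{k-2}{k-1}\bigr)^{k-2}$. Using $\bigl(\tfrac{k-2}{k-1}\bigr)^{k-2}=\bigl(1-\tfrac1{k-1}\bigr)^{k-2}\ge \ee^{-1}$ for all $k\ge 2$ (here for $k=2$ the quantity is $1\ge\ee^{-1}$, and for $k\ge 3$ it follows from $(1-1/m)^{m-1}\ge\ee^{-1}$ with $m=k-1$), we get $p(1-p)^{k-2}\ge \tfrac{1}{\ee(k-1)}$. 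This would yield average degree at least $\tfrac{2|E|}{n\ee(k-1)}$, which is slightly stronger than the stated $\tfrac{2|E|}{n\ee k}$; since $k-1\le k$, the stated bound follows a fortiori. The one point requiring a little care — and the main (minor) obstacle — is justifying the inequality $(1-1/(k-1))^{k-2}\ge 1/\ee$ cleanly for all integers $k\ge 2$, and making sure the degenerate cases $E=\emptyset$ and $V(G_2)=\emptyset$ are handled so that a valid subgraph $H$ is actually produced.
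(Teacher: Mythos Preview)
Your approach is essentially the same as the paper's: random sampling of vertices with a fixed probability, followed by the linearity-of-expectation trick to extract a subgraph of the right density. The paper chooses $p=1/k$ rather than your $p=1/(k-1)$, but that is immaterial.

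Two small issues to clean up. First, there is an arithmetic slip: the coefficient in your random variable should be $\tfrac{|E|p^2(1-p)^{k-2}}{pn}$, not $\tfrac{2|E|p^2(1-p)^{k-2}}{pn}$; with the factor of $2$ the expectation comes out to $-|E|p^2(1-p)^{k-2}$, not $0$. With the corrected coefficient, the expectation is $\ge 0$ and your stated conclusion about the average degree is right. Second, your treatment of the degenerate case $V(G_2)=\emptyset$ is not valid: the fact that one particular outcome has $|E(G_2)|=0$ does not force $|E|p^2(1-p)^{k-2}=0$. The clean fix (which the paper uses) is to aim directly for the weaker target $\tfrac{2|E|}{n\ee k}$ in the linear combination: since $p(1-p)^{k-2}>\tfrac{1}{\ee k}$ strictly for your (or the paper's) choice of $p$ when $k\ge 2$, the expectation of $|E(G_2)|-\tfrac{|E|}{n\ee k}|V(G_2)|$ is strictly positive, so some outcome has $|E(G_2)|>0$ and hence $V(G_2)\ne\emptyset$.
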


\begin{proof}
Let $\bH$ be the (random) graph  obtained by selecting each vertex of $\cH$ independently with probability $1/k$, and keeping a single edge (of cardinality 2) between $u$ and $v$ whenever the only selected vertices of $e_{uv}$ are $u$ and $v$. Then we have
\begin{align*}
\esp{|V(\bH)|} &= \frac{n}{k}, \quad \et \\
\esp{|E(\bH)|} &\ge |E|\cdot \frac{1}{k^2}\pth{1-\frac{1}{k}}^{k-2} > \frac{|E|}{\ee k^2},
\end{align*}
since $k\ge 2$. It follows that $\esp{2|E(\bH)|-\frac{2|E|}{n\ee k}|V(\bH)|}> 0$. In particular, there exists a subgraph $H$ of $(V, E)$ such that $\mathrm{ad}(H)\ge \tfrac{2|E|}{n\ee k}$ and for every edge $uv$ of $H$, the edge $e_{uv}$ of $\cH$ contains no
vertex from $V(H)-\{u,v\}$, as desired. 
\end{proof}

The proof actually gives a randomized algorithm producing the graph $H$. This algorithm can easily be derandomized using the method of conditional expectations, giving a deterministic algorithm running in time $O(|E|+n)$.

\medskip

Given a hypergraph $\cH$ and  a matching $\mathcal{B}$  in $\cH$, we define the \emph{packing-hypergraph} $\mathcal{P}(\cH,\mathcal{B})$ as the hypergraph with vertex set $\mathcal{B}$, in which a subset $\mathcal{B}'\subseteq \mathcal{B}$
is an edge if some edge of $\cH$  intersects all the edges in $\mathcal{B}'$ and no other edge of $\mathcal{B}$.

\begin{lemma}
  \label{lem:bounded-hyper-edges}
Let $G$ be a graph such that each minor of $G$ has average degree at most $d$, let $\cH$ be a ball hypergraph of $G$, and let $\mathcal{B}$ be a matching of size $n$ in $\cH$. For every integer $k\ge 2$, the number of edges of cardinality at most $k$ in the packing-hypergraph $\mathcal{P}(\cH,\mathcal{B})$ is at most
\[ \pth{1+d\ee k}^{k-1}\cdot n.\]
\end{lemma}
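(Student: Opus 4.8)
The plan is to bound the number of edges of cardinality exactly $\ell$ in $\mathcal{P}(\cH,\mathcal{B})$ for each $2\le \ell\le k$, and then sum (plus handle $\ell=1$, which contributes at most $n$). Fix $\ell$. An edge of size $\ell$ in $\mathcal{P}(\cH,\mathcal{B})$ is a set of $\ell$ balls $\{B_{i_1},\dots,B_{i_\ell}\}\subseteq\mathcal{B}$ together with a witnessing ball $C$ of $G$ that meets exactly these $\ell$ members of $\mathcal{B}$. The idea is to iterate Lemma~\ref{lem:hredu}: think of $\mathcal{P}(\cH,\mathcal{B})$ restricted to its edges of size $\le \ell$ as a rank-$\ell$ multi-hypergraph on the $n$-vertex set $\mathcal{B}$. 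If it had more than $(1+d\ee \ell)^{\ell-1}n$ edges of size exactly $\ell$ (in fact of size $\le\ell$), I want to derive a $K_{?}$-minor or a minor of too-large average degree in $G$, contradicting the hypothesis that every minor of $G$ has average degree $\le d$.

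The mechanism is as follows. Suppose for contradiction that the number of edges of size at most $\ell$ is larger than the claimed bound. Apply Lemma~\ref{lem:hredu} with this rank-$\ell$ multi-hypergraph on $n$ vertices: either there are many edges of size $\le \ell-1$, in which case I recurse on $\ell-1$ (the bound $(1+d\ee k)^{k-1}n$ is designed so the induction on $k$ closes), or else Lemma~\ref{lem:hredu} produces a subgraph $H$ of the ``pair graph'' on $\mathcal{B}$ with $\mathrm{ad}(H)\ge \tfrac{2|E|}{n\ee \ell}> 2d$ (after choosing the threshold appropriately) and with the crucial property that for each edge $\{B_i,B_j\}$ of $H$, the witnessing hyperedge of $\mathcal{P}(\cH,\mathcal{B})$ — hence a ball $C_{ij}$ of $G$ — meets $B_i$ and $B_j$ but no other ball of $\mathcal{B}$ appearing as a vertex of $H$. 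That is exactly the hypothesis of Lemma~\ref{lem:minor2}: restricting $\mathcal{B}$ to $V(H)$ gives pairwise vertex-disjoint balls, $E_S=E(H)$, and each pair is associated with a ball $B_{\{i,j\}}=C_{ij}$ of $G$ not in (the restricted) $\mathcal{B}$ that intersects only $B_i,B_j$ among them. Hence $H$ is a minor of $G$, and $\mathrm{ad}(H)>d$ contradicts the density hypothesis on $G$.

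Concretely I would set up a single induction on $k\ge 2$. Base case $k=2$: an edge of size $\le 2$ is either a singleton (at most $n$ of them) or a pair; let $E$ be the set of pairs that occur as edges, and if $|E|>d\ee\cdot n$ then Lemma~\ref{lem:hredu} (with $k=2$) yields $H\subseteq(\mathcal{B},E)$ with $\mathrm{ad}(H)\ge\tfrac{2|E|}{2n\ee}>d$ and the disjointness property, so Lemma~\ref{lem:minor2} makes $H$ a minor of $G$ of average degree $>d$, contradiction; thus at most $n+d\ee n\le(1+d\ee\cdot 2)^{1}n$ edges of size $\le 2$. For the inductive step, assume the bound for $k-1$ and suppose $\mathcal{P}(\cH,\mathcal{B})$ has more than $(1+d\ee k)^{k-1}n$ edges of size $\le k$. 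By induction at most $(1+d\ee(k-1))^{k-2}n\le(1+d\ee k)^{k-2}n$ of them have size $\le k-1$, so strictly more than $\big[(1+d\ee k)^{k-1}-(1+d\ee k)^{k-2}\big]n=d\ee k\,(1+d\ee k)^{k-2}n$ have size exactly $k$; let $E$ collect, for each, one witnessing pair of its members, so $|E|>d\ee k\,(1+d\ee k)^{k-2}n$, and with rank $k$ Lemma~\ref{lem:hredu} gives $H$ with $\mathrm{ad}(H)\ge\tfrac{2|E|}{n\ee k}>2d(1+d\ee k)^{k-2}\ge d$ and the property that each edge's witness meets no other vertex of $H$; Lemma~\ref{lem:minor2} again makes $H$ a minor, contradiction.

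The main obstacle I anticipate is bookkeeping the witnesses so that Lemma~\ref{lem:minor2}'s hypothesis is met exactly: a size-$k$ hyperedge of $\mathcal{P}(\cH,\mathcal{B})$ is witnessed by a ball $C$ meeting precisely those $k$ members of $\mathcal{B}$, but after passing to the pair $\{B_i,B_j\}\subseteq C$ I must ensure $C$ still avoids all \emph{other} vertices of the subgraph $H$ — this is guaranteed because Lemma~\ref{lem:hredu} keeps the edge $ij$ only when $e_{uv}$ (here $C$) contains no vertex of $V(H)$ other than $u,v$, and $V(H)\subseteq\mathcal{B}$, so ``no other vertex of $H$'' follows from ``meets only $B_i,B_j$ in $\mathcal{B}$'' restricted to $V(H)$. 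A secondary point is that $C=B_{\{i,j\}}$ must not lie in $S$; since $C$ is a witness it meets $\ge 2$ disjoint members of $\mathcal{B}$ (when $k\ge 2$), so it cannot be a single member of $\mathcal{B}$, and if $C$ happened to equal some member of $\mathcal{B}$ we would have an immediate contradiction with the disjointness of $\mathcal{B}$; so this case does not arise. Modulo these checks the computation is the elementary inequality manipulation sketched above.
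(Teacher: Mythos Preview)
Your inductive step contains a genuine gap. You write: ``let $E$ collect, for each [size-$k$ hyperedge], one witnessing pair of its members, so $|E|>d\ee k\,(1+d\ee k)^{k-2}n$.'' But $E$ must be a \emph{set} of pairs (this is what Lemma~\ref{lem:hredu} takes as input), and distinct size-$k$ hyperedges of $\mathcal{P}(\cH,\mathcal{B})$ may very well contribute the same pair. So having many size-$k$ hyperedges does not force $|E|$ to be large: many $k$-subsets can be supported on very few pairs (think of all $k$-subsets of a fixed $(k+c)$-set). Lemma~\ref{lem:hredu} only controls the pair graph on $\mathcal{B}$; it says nothing directly about the number of $k$-sets. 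Your base case $k=2$ is fine precisely because size-$2$ hyperedges \emph{are} pairs, so this collapse does not occur; for $k\ge 3$ the inference fails.

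The paper fills this gap with one additional idea. Exactly as you do, it first applies Lemma~\ref{lem:hredu} together with Lemma~\ref{lem:minor2} to show that the pair graph $H$ on $\mathcal{B}$ (with $B_iB_j\in E(H)$ iff some edge of $\mathcal{P}'$ contains both) has at most $\tfrac12 d\ee k\,n$ edges. The key further observation is that the same argument applies with $\mathcal{B}$ replaced by any subset of $\mathcal{B}$, so every induced subgraph of $H$ has a vertex of degree at most $d\ee k$: the graph $H$ is $\lfloor d\ee k\rfloor$-degenerate. Now every size-$\le k$ edge of $\mathcal{P}(\cH,\mathcal{B})$ is a clique of size $\le k$ in $H$, and an $\ell$-degenerate graph on $n$ vertices has at most $n\sum_{i=1}^k\binom{\ell}{i-1}\le n(1+\ell)^{k-1}$ cliques of size at most $k$. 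So your use of Lemmas~\ref{lem:hredu} and~\ref{lem:minor2} is the right first move, but the passage from ``few pairs'' to ``few $k$-sets'' requires this degeneracy-plus-clique-count argument rather than a direct induction on $k$.
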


\begin{proof}
  Let $\mathcal{P}'$ be the partial hypergraph of $\mathcal{P}(\cH,\mathcal{B})$ induced by the edges of cardinality at most $k$. Let $H$ be the graph with vertex set $\mathcal{B}$ in which two distinct vertices are adjacent if they are contained in an edge of $\mathcal{P}'$ (i.e.\ an edge of $\mathcal{P}(\cH,\mathcal{B})$ of cardinality at most $k$). Let $m$ be the number of edges of $H$.  Applying Lemma~\ref{lem:hredu} to $\mathcal{P}'$, we obtain a subgraph $H'$ of $H$ of average degree at least $\tfrac{2m}{n\ee k}$, and such that for any pair $x,y$ of adjacent vertices in $H'$, there is an edge of $\mathcal{P}'$ that contains $x$ and $y$ and no other vertex of $H'$. The vertices of $H'$ correspond to a subset $S$ of pairwise disjoint balls of $G$ (since $\mathcal{B}$ is a matching), and each edge of $H'$ corresponds to a ball of $G$ that intersects some pair of balls of $S$ (and does not intersect any other ball of $S$).

  By Lemma~\ref{lem:minor2}, $H'$ is a minor of $G$, so in particular $\tfrac{2m}{n\ee k}\le \mathrm{ad}(H')\le d$, and hence $m \le \frac12 \, d\ee k n$. It follows that $H$ contains a vertex of degree at most $d\ee k$, and the same is true for every induced subgraph of $H$ (since we can replace $\mathcal{B}$ in the proof by any subset of $\mathcal{B}$). As a consequence, $H$ is $\lfloor d\ee k \rfloor$-degenerate. It is a folklore result that $\ell$-degenerate graphs on $n$ vertices have at most $\binom{\ell}{t-1} n$ cliques of size $t$ (see for instance~\cite[Lemma 18]{Woo16}, where the proof gives a linear time algorithm to enumerate all the cliques of size $t$ when $t$ and $\ell$ are fixed),  and hence there are at most
 \[ n\cdot \sum_{i=1}^k \binom{\lfloor d\ee k \rfloor}{i-1} \le n \cdot \pth{1+d\ee k}^{k-1}\]
cliques of size at most $k$ in $H$, which is an upper bound on the number of edges of cardinality at most $k$ in $\mathcal{P}(\cH,\mathcal{B})$.
\end{proof}

Note that the proof gives an $O(n)$ time algorithm enumerating all edges of cardinality at most $k$ in the packing-hypergraph $\mathcal{P}(\cH,\mathcal{B})$, when $k$ and $d$ are fixed (note that since $H$ is $\lfloor d\ee k \rfloor$-degenerate, it contains a linear number of edges, and thus the application of  Lemma~\ref{lem:hredu} takes time $O(n)$).


\section{Fractional packings of balls}\label{sec:frac}

We now prove Theorem~\ref{thm:frac}. The proof is inspired by ideas from~\cite{ReSh96}.

\medskip

{\noindent \emph{Proof of Theorem~\ref{thm:frac}.} 
Let $\cH$ be a ball hypergraph of $G$.  Since $\nu^*(\cH)$ is attained and is a rational number (recall that $\nu^*(\cH)$ is the solution of a linear program with integer coefficients), there exists a multiset $\mathcal{B}$ of $p$ balls of $G$, such that every vertex $v\in V(G)$ is contained in at most $q$ balls of $\mathcal{B}$, and $\nu^*(\cH)=p/q$ (see for instance~\cite{ReSh96}, where the same argument is applied to fractional cycle packings). We may assume that $q$ is arbitrarily large (by taking $k$ copies of each ball of $\mathcal{B}$, with multiplicities, for some arbitrarily large constant $k$), so in particular we may assume that $q\ge 2$. We may also assume that $G$ contains at least one edge (i.e.\ $d\ge 1$), otherwise the result clearly holds. 
Enumerate all the balls in $\mathcal{B}$ as $B_1, B_2, \dots, B_p$ (and recall that since $\mathcal{B}$ is a multiset, some balls $B_i$ and $B_j$ might coincide). 
We may assume that there is no pair of balls $B_i,B_j$ such that $B_i\subset B_j$ (otherwise we can replace $B_j$ by $B_i$ in $\mathcal{B}$, and we still have a fractional matching). It follows that the balls of $\mathcal{B}$ are pairwise incomparable (as defined at the beginning of Section~\ref{sec:struct}). For any two intersecting balls $B_i$ and $B_j$ we define $x_{ij}$ as a median vertex of $B_i$ and $B_j$ (also defined at the beginning of Section~\ref{sec:struct}). Recall that it implies in particular that whenever $B_i$ and $B_j$ intersect,  $x_{ij}\in B_i\cap B_j$, and if $B_i$ and $B_j$ coincide then $x_{ij}$ is the center of $B_i$ and $B_j$.

We let $\mathcal{G}$ be the intersection graph of the balls in $\mathcal{B}$, that is $V(\mathcal{G})=\mathcal{B}$ and two vertices $B_i,B_j\in \mathcal{B}=V(\mathcal{G})$ with $i\neq j$ are adjacent in $\mathcal{G}$ if and only if ${B_i\cap B_j \neq \varnothing}$.  
(In particular, there is an edge linking $B_i$ and $B_j$ when $B_i$ and $B_j$ are two copies of the same ball.) 
Let $m$ be the number of edges of $\mathcal{G}$. 
Let $\mathcal{B}^*$ denote the multi-hypergraph with vertex set $\mathcal{B}$, where for every vertex of $G$ of the form $x_{ij}$ there is a corresponding edge consisting of the balls in $\mathcal{B}$ that contain $x_{ij}$. 
Note that two distinct such vertices could possibly define the same edge, which is why edges in $\mathcal{B}^*$ could have multiplicities greater than $1$.  
The multi-hypergraph $\mathcal{B}^*$ has rank at most $q$ and contains $p$ vertices. Note moreover that the number of pairs of vertices $B_i,B_j$ of $\mathcal{B}^*$ with $i\neq j$ such that there exists an edge of $\mathcal{B}^*$ containing $B_i$ and $B_j$ is precisely $m$.

By Lemma~\ref{lem:hredu} applied to the multi-hypergraph $\mathcal{B}^*$, we obtain a graph $H=(S,E_S)$ satisfying the following properties:
\begin{itemize}
\item $S\subseteq \mathcal{B}$;
\item for each edge $\{B_i,B_j\}\in E_S$, $x_{ij}$ is contained in $B_i$ and $B_j$ but in no other ball from $S$, and 
\item $\mathrm{ad}(H)\ge \tfrac{2m}{p\ee q}$.
\end{itemize}
    
We would like to apply Lemma~\ref{lem:minor1} to $H$ but this is not immediately possible, since some balls of $S$ might coincide (recall that $\mathcal{B}$ is a multiset), and therefore the centers of the balls of $S$ might not be pairwise distinct. However, observe that if two balls of $S$ coincide, then by definition the two corresponding vertices of $H$ have degree either 0 or 1 in $H$ (and in the latter case the two vertices are adjacent in $H$). Indeed, if two balls $B_i,B_j$ of $S$ coincide and $B_i$ is adjacent to $B_k$ in $H$ with $k \neq j$, then the only balls of $S$ containing $x_{ik}$ are $B_i$ and $B_k$, contradicting the fact that $x_{ik}$ is also in $B_j$. 

Let $S_1 \subseteq S$ be the subset of balls of $S$ having multiplicity $1$ in $S$. Since no ball of $\mathcal{B}$ is a strict subset of another ball of $\mathcal{B}$, the centers of the balls of $S_1$ are pairwise distinct. As a consequence of the previous paragraph, if we consider the subgraph $H_1$ of $H$ induced by $S_1 $, then $\mathrm{ad}(H)\le \max(1,\mathrm{ad}(H_1))$.

By Lemma~\ref{lem:minor1} applied to the set of balls $S_1$ in $G$, we obtain that $H_1$ is a minor of $G$ and 
thus $\mathrm{ad}(H_1)\le d$. It follows that  $\tfrac{2m}{p\ee q}\le \mathrm{ad}(H)\le \max(1, d)\le d$ (since $d\ge 1$). This implies that the average degree $2m/p$ of $\mathcal{G}$ is at most
$\ee \,dq$. By the Caro-Wei inequality~\cite{Caro79,Wei81} (or Tur\'an's theorem~\cite{Turan41}), it follows that $\mathcal{G}$ contains an independent set of size at least $$\frac{|V(\mathcal{G})|}{\mathrm{ad}(\mathcal{G})+1}\ge \frac{p}{\ee \,dq+1}=\frac{\nu^*(\cH)}{\ee \,d+1/q}.$$ An independent set in $\mathcal{G}$ is precisely a matching in $\cH$, and thus $\nu(\cH)\ge \tfrac1{\ee\,d+1/q}\cdot\nu^*(\cH)$ and $\nu^*(\cH)\le (\ee\,d+1/q)\cdot\nu(\cH)$. Since we can assume that $q$ is arbitrarily large, it follows that $\nu^*(\cH)\le \ee\,d\cdot\nu(\cH)$, as desired.

\medskip

The rest of the result follows from well known results on the average degree of graphs. On the one hand, an easy consequence of Euler's formula is that planar graphs have average degree at most 6. On the other hand, it was proved by Kostochka~\cite{Kos84} and Thomason~\cite{Tho84} that  every $K_{t}$-minor-free graph has average degree $O(t\sqrt{\log t})$. \hfill $\Box$

\bigskip

The linear program for $\nu^*$ has coefficients in $\{0,1\}$, and can thus be solved in time $O(n^3)$, since we can assume that the balls have pairwise distinct centers (and so the number of variables and inequalities is linear in the number of vertices). The associated rational coefficients $w_e$ can thus be found in time $O(n^3)$. It is then convenient to define $w_e'$ as the largest $\tfrac{\ell}{n}\le w_e$ with $\ell\in \mathbb{N}$. Note that the coefficients $(w_e')$ still satisfy the inequalities of the linear program for $\nu^*$, and their sum is at least $\nu^*-1$ since we can assume that there are at most $n$ balls (since there centers are pairwise distinct).  There is a small loss on the multiplicative constant (compared to the statement of Theorem~\ref{thm:frac}), but we can now assume that in the proof we have $q\le n$ and thus $p\le n^2$ and $m=O(n^3)$. It follows that the application of  Lemma~\ref{lem:hredu} can be done in time $O(m)=O(n^3)$, and the construction of a stable set of suitable size in $\mathcal{G}$ can also be done in time $O(m)=O(n^3)$. Therefore, the proof of Theorem~\ref{thm:frac} gives an $O(n^3)$ time algorithm constructing a matching of size $\Omega(\nu^*(\mathcal{H}))$ in $\mathcal{H}$.

\medskip

The \emph{VC-dimension} of a hypergraph $\cH$ is the cardinality of a largest subset $X$ of vertices such that for every $X'\subseteq X$, there is an edge $e$ in $\cH$ such that $e\cap X=X'$. Bousquet and Thomassé~\cite{BoTh} proved the following result.

\begin{thm}\label{thm:both}
  If $G$ has no $K_t$-minor, then every ball hypergraph $\cH$ of $G$ has VC-dimension at most $t-1$.
\end{thm}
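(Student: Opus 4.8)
The plan is to prove the statement in the following contrapositive-flavored form: if some ball hypergraph $\cH$ of $G$ shatters a set $X$ with $|X|=t$, then $K_t$ is a minor of $G$. Since $G$ has no $K_t$-minor, this shows that no $t$-element vertex set is shattered, and hence (shattering being monotone under taking subsets) that every ball hypergraph of $G$ has VC-dimension at most $t-1$.

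So suppose $X=\{x_1,\dots,x_t\}\subseteq V(G)$ is shattered by $\cH$. The idea is to feed this configuration straight into Lemma~\ref{lem:minor2}. First I would take $S:=\{B_0(x_1),\dots,B_0(x_t)\}$, the family of $t$ singleton balls; these are pairwise vertex-disjoint because $x_1,\dots,x_t$ are distinct. For each pair $\{i,j\}\in\binom{[t]}{2}$, the shattering property provides a ball $B_{\{i,j\}}$ of $G$ with $B_{\{i,j\}}\cap X=\{x_i,x_j\}$. Such a ball contains the two distinct vertices $x_i$ and $x_j$, hence has at least two vertices and is therefore not one of the singletons in $S$; and for every $\ell\notin\{i,j\}$ we have $x_\ell\notin B_{\{i,j\}}$, so $B_{\{i,j\}}$ intersects exactly the balls $B_0(x_i)$ and $B_0(x_j)$ among the members of $S$. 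Taking $E_S:=\binom{S}{2}$ and associating to each pair $\{i,j\}$ the ball $B_{\{i,j\}}$, all hypotheses of Lemma~\ref{lem:minor2} are met, so $H:=(S,E_S)$ is a minor of $G$. But $H\cong K_t$, contradicting the assumption that $G$ is $K_t$-minor-free, which finishes the argument.

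The genuinely hard part is already packaged inside Lemma~\ref{lem:minor2}: what is nontrivial is not that between any two of the $x_i$ there is a connecting ball (that is immediate from shattering), but that all these connections can be realized \emph{simultaneously} by pairwise-disjoint branch sets of a single $K_t$-minor. That is precisely the content of Lemma~\ref{lem:minor2}, proved via the ``meet-in-the-middle'' choice of centers $x_{ij}$ on shortest paths, the trees $\cT_i$, and the leaf argument. If one preferred a self-contained proof, one could instead redo that construction directly in the special case $S=\{\{x_1\},\dots,\{x_t\}\}$: pick each $B_{\{i,j\}}$ of minimum radius, let $P(x_i,x_{ij})$ be a lexicographically-minimal shortest path from $x_i$ to the center $x_{ij}$ of $B_{\{i,j\}}$, set $\cT_i:=\bigcup_{j}P(x_i,x_{ij})$, and verify, exactly as in Claims 1 and 2 of the proof of Lemma~\ref{lem:minor2}, that distinct $\cT_i$ meet only in the prescribed centers; but since this merely re-derives Lemma~\ref{lem:minor2}, invoking the lemma directly is the cleaner route.
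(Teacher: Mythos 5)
Theorem~\ref{thm:both} is not actually proved in this paper; it is quoted as a black-box result from Bousquet and Thomass\'e~\cite{BoTh}, and the paper only remarks that Lemmas~\ref{lem:minor1} and~\ref{lem:minor2} are proved ``very similarly'' to the original argument in~\cite{BoTh}. Your derivation of Theorem~\ref{thm:both} from Lemma~\ref{lem:minor2} is correct and very much in the spirit of the paper. The checks are all right: the singleton balls $B_0(x_1),\dots,B_0(x_t)$ are pairwise disjoint; for each pair $\{i,j\}$ the shattering condition supplies a ball $B_{\{i,j\}}$ with $B_{\{i,j\}}\cap X=\{x_i,x_j\}$, which therefore intersects exactly $B_0(x_i)$ and $B_0(x_j)$ among the members of $S$ and, having at least two vertices, cannot itself lie in $S$; and distinct pairs automatically receive distinct balls since their traces on $X$ differ (not that the lemma requires this). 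Lemma~\ref{lem:minor2} applied with $E_S=\binom{S}{2}$ then produces a $K_t$-minor, establishing the contrapositive. As you note, the nontrivial combinatorics --- that the pairwise connections can be realised simultaneously by vertex-disjoint branch sets --- is entirely encapsulated in Lemma~\ref{lem:minor2}, and taking $S$ to consist of singleton balls is the cleanest configuration to which it applies. So while this is technically a re-derivation rather than the original proof of~\cite{BoTh}, it is a correct and economical one, and arguably displays the intended relationship between the cited theorem and the machinery this paper develops.
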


A classical result is that for hypergraphs of bounded VC-dimension, $\tau=O(\tau^*\log \tau^*)$. We will use the following precise bound of Ding, Seymour, and Winkler~\cite{DSW94}.

\begin{thm}\label{thm:dsw}
If a hypergraph $\cH$ has VC-dimension at most $\delta$, then \[\tau(\cH)\le 2 \delta \tau^*(\cH) \log(  11\tau^*(\cH)).\]
\end{thm}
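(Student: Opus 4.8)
The plan is to recognise the statement as a special case of the $\eps$-net theorem. Put $t:=\tau^*(\cH)$; we may assume $\cH$ has at least one nonempty edge, so $t\ge1$, and (we may also assume $\cH$ is finite). Fix an optimal fractional transversal $(w_v)_{v\in V}$, so that $w_v\ge0$, $\sum_{v\in e}w_v\ge1$ for every edge $e$, and $\sum_v w_v=t$. Then $\mu(v):=w_v/t$ defines a probability distribution on $V$ under which every edge $e$ has $\mu(e):=\sum_{v\in e}\mu(v)\ge 1/t$. Recall that $N\subseteq V$ is an $\eps$-\emph{net} for $\mu$ if $N$ meets every edge of $\mu$-measure at least $\eps$; since every edge of $\cH$ has $\mu$-measure $\ge1/t$, a $(1/t)$-net for $\mu$ meets every edge, i.e.\ is a transversal of $\cH$. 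Hence it is enough to produce a $(1/t)$-net of size at most $2\delta t\log(11t)$.

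To do so I would sample $m$ vertices $v_1,\dots,v_m$ independently from $\mu$, where $m$ is of order $\delta t\log t$ (the exact value being pinned down at the end), and show the resulting multiset $N$ is a $(1/t)$-net with positive probability. The difficulty is that $\cH$ may have far too many edges for a direct union bound; this is handled by the classical symmetrisation (``double sampling'') argument. Draw a second independent sample $N'=\{v_1',\dots,v_m'\}$ from $\mu$. \emph{Step 1.} If $N$ is not a $(1/t)$-net, fix an edge $e_0=e_0(N)$ with $\mu(e_0)\ge1/t$ and $N\cap e_0=\emptyset$. Since $|N'\cap e_0|$ is a sum of $m$ independent indicators of total mean $m\mu(e_0)\ge m/t$, a Chernoff bound gives $|N'\cap e_0|\ge\alpha\,m/t$ with probability at least $\tfrac12$, for a suitable absolute constant $\alpha\in(0,1)$, as soon as $m/t$ exceeds an absolute constant. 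Writing $B$ for the event that some edge $e$ satisfies $N\cap e=\emptyset$ and $|N'\cap e|\ge\alpha\,m/t$, this yields $\pr{B}\ge\tfrac12\,\pr{N\text{ is not a }(1/t)\text{-net}}$.

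\emph{Step 2.} Condition on the multiset $T$ of the $2m$ draws; then $N$ and $N'$ arise from a uniformly random split of the $2m$ positions into two halves of size $m$. For a fixed edge $e$ occupying $j$ positions of $T$, the probability that all $j$ land on the $N'$-side is $\binom{2m-j}{m}/\binom{2m}{m}\le 2^{-j}$, and only edges with $j\ge\alpha\,m/t$ can witness $B$. By the Sauer--Shelah lemma the edges of $\cH$ cut out at most $\sum_{i=0}^{\delta}\binom{2m}{i}$ distinct traces on the at most $2m$ vertices appearing in $T$, so a union bound over traces gives $\pr{B\mid T}\le\pth{\sum_{i=0}^{\delta}\binom{2m}{i}}2^{-\alpha m/t}$, hence the same bound for $\pr{B}$. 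Combining with Step 1, $\pr{N\text{ is not a }(1/t)\text{-net}}\le 2\pth{\sum_{i=0}^{\delta}\binom{2m}{i}}2^{-\alpha m/t}$, and using $\sum_{i\le\delta}\binom{2m}{i}\le(2\ee m/\delta)^\delta$ this is less than $1$ once $m$ is a sufficiently large multiple of $\delta t\log t$; such an $m$ produces a $(1/t)$-net, hence a transversal of $\cH$, of the corresponding size.

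This already yields $\tau(\cH)=O(\delta\tau^*\log\tau^*)$, and I expect the main obstacle to be squeezing the constants down to exactly $2\delta$ and $11$ as claimed. The vanilla argument above loses a multiplicative constant — roughly a factor $1/\log2$ from the $2^{-j}$ estimate together with the factor $\alpha$ from Step 1 — so it does not by itself reach leading coefficient $2$ uniformly in $\tau^*\ge1$. Closing this gap is precisely the content of Ding, Seymour and Winkler's analysis: one uses a sharper, weighted multi-round variant of the sampling (in the spirit of Koml\'os--Pach--Woeginger) so that the resulting $\eps$-net has size $(1+o(1))\,\delta\,\eps^{-1}\log(1/\eps)$, treats the finitely many small values of $\tau^*$ by a separate direct argument, and then, substituting $\eps=1/\tau^*$, checks that the lower-order terms $\delta\log\log\tau^*$ and $\delta\log\delta$ are absorbed — the constant $11$ inside the logarithm being exactly what makes the inequality hold for every $\tau^*\ge1$.
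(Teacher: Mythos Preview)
The paper does not prove this theorem at all: it is quoted verbatim as a result of Ding, Seymour, and Winkler~\cite{DSW94} and used as a black box, so there is no ``paper's own proof'' to compare against. Your outline is the standard Haussler--Welzl double-sampling argument and is correct for the qualitative bound $\tau(\cH)=O(\delta\,\tau^*\log\tau^*)$; however, your final paragraph does not actually close the gap to the stated constants $2$ and $11$ but rather defers to the very reference the theorem is attributed to, so as a self-contained proof of the precise inequality it remains incomplete. For the purposes of this paper that is immaterial, since only the existence of such a bound (indeed, only the qualitative Erd\H os--P\'osa consequence via Theorem~\ref{thm:frac}) is used downstream.
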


Combining Theorems~\ref{thm:frac}, \ref{thm:both}, and \ref{thm:dsw}, and using that $\nu^*(\cH)=\tau^*(\cH)$, we obtain Theorem~\ref{thm:nearlinear} as a direct consequence.

\medskip

As before, the linear program for $\tau^*$ has coefficients in $\{0,1\}$, and can thus be solved in time $O(n^3)$, since we can assume that the balls have pairwise distinct centers (and so the number of variables and inequalities is linear in the number of vertices). The associated rational coefficients $w_v$ can thus be found in time $O(n^3)$. Using algorithmic versions of Theorem~\ref{thm:dsw} (see~\cite{HW87,Mat95}) and the coefficients $(w_v)$, a transversal of $\mathcal{H}$ of size $O(\tau^* \log \tau^*)=O(\nu \log \nu)$ can be found by a randomized algorithm sampling $O(\tau^* \log \tau^*)$ vertices according to the distribution given by $(w_v)$, or a deterministic algorithm running in time $O(n ({\tau^*}^2 \log \tau^*)^t)$. So the overall complexity of obtaining a transversal of the desired size is $O(n^3)$ (randomized) and $O(n^3+n (\tau^* \log \tau^*)^t)$ (deterministic). In the remainder of the paper, the result will be used when $\tau^*$ is a fixed constant, in which case the complexity of the deterministic algorithm is also $O(n^3)$.

\section{Linear bound}\label{sec:main}

In this section we prove Theorem~\ref{thm:main}. 
Recall that by Theorem~\ref{thm:nearlinear}, there is a (monotone) function $f_t$ such that $\tau(\cH)\le f_t(\nu(\cH))$ for every ball hypergraph $\cH$ of a $K_t$-minor-free graph. In the proof, we write $d_t$ for the supremum of the average degree of $G$ taken over all graphs $G$ excluding $K_t$ as a minor. Recall that $d_t=O(t\sqrt{\log t})$~\cite{Kos84,Tho84}.

\medskip

Let $t\ge 1$ be an integer and let $c_t:=2\cdot (1+\tfrac32 d_t^2\ee )^{3d_t/2}\cdot f_t(\tfrac32 d_t)$. We will prove that every ball hypergraph $\cH$ of a $K_t$-minor-free graph satisfies $\tau(\cH)\le c_t \cdot \nu(\cH)$.

\medskip

{\noindent \emph{Proof of Theorem~\ref{thm:main}.} 
We prove the result by induction on $k\coloneqq \nu(\cH)$. The result clearly holds if $k=0$ so we may assume that $k\ge 1$. If $k\le \tfrac32 d_t$ then by the definition of $f_t$ we have $\tau(\cH)\le f_t(\tfrac32 d_t)\le c_t \le c_t \cdot k$, as desired.

Assume now that $k\ge \tfrac32 d_t$ and for every ball hypergraph $\cH'$ of a $K_t$-minor-free graph with $\nu(\cH')<k$, we have $\tau(\cH')\le c_t \cdot \nu(\cH')$. Let $G$ be a $K_t$-minor-free graph and $\cH$ be a ball hypergraph of $G$ with  $\nu(\cH)=k$. Our goal is to show that $\tau(\cH)\le c_t \cdot k$. Note that we can assume that $\cH$ is \emph{minimal}, in the sense that no edge of $\cH$ is contained in another edge of $\cH$ (otherwise we can remove the larger of the two from $\cH$, this does not change the matching number nor the transversal number). 

\medskip

Consider a maximum matching $\mathcal{B}$ (of cardinality $k$) in $\cH$. Let $\mathcal{E}_1$ be the set consisting of all the edges of $\cH$ that intersect at most $\tfrac32 d_t$ edges of $\mathcal{B}$. Note that each edge of $\mathcal{B}$ lies in $\mathcal{E}_1$, and therefore $\mathcal{E}_1$ is non-empty. By Lemma~\ref{lem:bounded-hyper-edges}, the packing-hypergraph $\mathcal{P}(\cH,\mathcal{B})$ contains at most $(1+\tfrac32 d_t^2\ee )^{3d_t/2}\cdot k$ edges of cardinality at most $\tfrac32 d_t$. For each such edge $e$ of $\mathcal{P}(\cH,\mathcal{B})$, consider the corresponding subset $\mathcal{B}_e$ of at most $\tfrac32 d_t$ edges of $\mathcal{B}$, and the subset $\mathcal{E}_e$ of edges of $\cH$ that intersect each ball of $\mathcal{B}_e$, and no other ball of $\mathcal{B}$. Denoting by $\cH_e$ the  partial hypergraph of $\cH$ with edge set $\mathcal{E}_e$, observe that by the maximality of the matching $\mathcal{B}$ we have $\nu(\cH_e)\le \tfrac32 d_t$  (since in $\mathcal{B}$, replacing the edges of $\mathcal{B}_e$ by a matching of $\mathcal{E}_e$ again gives a matching of $\cH$). It follows that $\tau(\cH_e)\le f(\tfrac32 d_t)$. And thus, if we denote by $\cH_1$ the  partial hypergraph of $\cH$ with edge set $\mathcal{E}_1$, we have
$$\tau(\cH_1)\le (1+\tfrac32 d_t^2\ee )^{3d_t/2}\cdot f(\tfrac32 d_t)\cdot  k = \tfrac12 c_t\cdot k.$$ 

Consider now the subset $\mathcal{E}_2$ consisting of all the edges of $\cH$ that intersect more than $\tfrac32 d_t$ edges of $\mathcal{B}$, and let $\cH_2$ be the  partial hypergraph of $\cH$ with edge set $\mathcal{E}_2$. Note that $\mathcal{E}_1$ and $\mathcal{E}_2$ partition the edge set of $\cH$ and thus $\tau(\cH)\le \tau(\cH_1)+\tau(\cH_2)$. Let $\mathcal{B}_2$ be a maximum matching in $\cH_2$, and let $\ell=\nu(\cH_2)=|\mathcal{B}_2|$.
Let $H$ be the (bipartite) intersection graph of the edges of  $\mathcal{B}\cup \mathcal{B}_2$, i.e.\ each vertex of $H$ corresponds to an edge of $\mathcal{B}\cup \mathcal{B}_2$, and two vertices are adjacent if the corresponding edges intersect. (The graph is bipartite because $\mathcal{B}$ and $\mathcal{B}_2$ are matchings.)

Note that since $H$ is bipartite, for every two distinct edges $\{B,B'\}$ and $\{C,C'\}$ of $H$, the sets $B\cap B'$ and $C\cap C'$ are disjoint. Moreover, no ball of $\mathcal{B}\cup \mathcal{B}_2$ is a subset of another ball of $\mathcal{B}\cup \mathcal{B}_2$, and thus the balls of $\mathcal{B}\cup \mathcal{B}_2$
are pairwise incomparable (as defined at the beginning of Section~\ref{sec:struct}). 
So, enumerating the balls in $\mathcal{B}\cup \mathcal{B}_2$ as $B_1, B_2, \dots, B_n$, we can choose, for each edge $\{B_i,B_j\}$ of $H$, a median vertex $x_{ij}$ of $B_i$ and $B_j$ (also defined at the beginning of Section~\ref{sec:struct}). Recall that $x_{ij}\in B_i\cap B_j$, and thus it follows from the property above that the only balls of $\mathcal{B}\cup \mathcal{B}_2$ containing $x_{ij}$ are $B_i$ and $B_j$. 
By Lemma~\ref{lem:minor1}, $H$ is a minor of $G$ and thus has average degree at most $d_t$. On the other hand, the vertices of $H$ corresponding to the edges of $\mathcal{B}_2$ have degree at least $\tfrac32 d_t$ in $H$, and thus
$$\tfrac32 d_t \cdot \ell\le \tfrac12 \,\mathrm{ad}(H)(k+\ell)\le \tfrac12 d_t \cdot (k+\ell),$$ where the central term counts the number of edges of $H$. It follows that $\nu(\cH_2)=\ell\le \tfrac{k}2$, and thus by the induction hypothesis we have $\tau(\cH_2)\le c_t\cdot \nu(\cH_2)\le c_t \cdot \tfrac{k}2$. As a consequence, $$\tau(\cH)\le \tau(\cH_1)+\tau(\cH_2)\le \tfrac12c_t\cdot k+  c_t \cdot \tfrac{k}2 = c_t \cdot k,$$ which concludes the proof of Theorem~\ref{thm:main}.
\hfill $\Box$

\bigskip

The first part of the proof of Theorem~\ref{thm:main} uses Theorem~\ref{thm:nearlinear} when $\nu$ (and thus $\tau^*$, by Theorem~\ref{thm:frac}) is bounded by a function of the constant $t$, and in this case, by the discussion after the proof of Theorem~\ref{thm:nearlinear}, a transversal of the desired size can be found deterministically in time $O(n^3)$.

The second part of the proof of Theorem~\ref{thm:main} can be made constructive by performing the following small modification. We observe that we have not quite used the fact that $\mathcal{B}$ is a \emph{maximum} matching of $\cH$, simply that it has the property that, for any edge $e$ in the packing-hypergraph $\mathcal{P}(\cH, \mathcal{B})$ of cardinality at most $\tfrac32d_t$, the matching number of $\cH_e$ is bounded.
As we have explained after Lemma~\ref{lem:bounded-hyper-edges}, such edges can be enumerated in linear time when $t$ is fixed. We can then compute each $\tau^*(\mathcal{H}_e)=\nu^*(\mathcal{H}_e)$ in time $O(n^3)$ and if this value is more than $\mathrm{e}\, d_t \cdot |e|$, then we can find a matching of size more than $|e|=|\mathcal{B}_e|$ in $\mathcal{H}_e$ in time $O(n^3)$ by Theorem~\ref{thm:frac}, and replace $\mathcal{B}_e$ by this larger matching in $\mathcal{B}$, thus increasing the size of $\mathcal{B}$ (this can be done at most $\nu(\mathcal{H})$ times). On the other hand, if for all the (linearly many) edges $e$ as above, we have $\tau^*(\mathcal{H}_e)\le \mathrm{e}\,d_t \cdot|e|=O({d_t}^2)$, then by Theorem~\ref{thm:nearlinear}, we can find a transversal of size $O({d_t}^2 \log d_t)$ in each hypergraph $\mathcal{H}_e$ in time $O(n^3)$. So overall we find a matching $\mathcal{B}$ that has the desired property, and a transversal of the partial hypergraph of $\mathcal{H}$ with edge set $\mathcal{E}_1$ of the desired size in time $O(\nu(\mathcal{H})\cdot n^4)$. Taking the induction step into account (which divides $\nu$ by at least 2), we obtain a deterministic algorithm constructing a transversal of size $O(\nu(\mathcal{H}))$ in $\mathcal{H}$, in time $O(\sum_{i\ge 0}\tfrac{1}{2^i}\cdot \nu(\mathcal{H})\cdot n^4)=O(\nu(\mathcal{H})\cdot n^4)$, when $t$ is a fixed constant.

\section{Conclusion}
\label{sec:conclusion}

The proof of Theorem~\ref{thm:main} gives a bound of the order of $\exp(t \log^{3/2}t)$ for the constant $c_t$. It would be interesting to improve this bound to a polynomial in $t$.

\medskip

It is also natural to wonder whether Theorem~\ref{thm:main} remains true in a  setting broader than proper minor-closed classes. Natural candidates are graphs of bounded maximum degree, graphs excluding a topological minor, $k$-planar graphs, classes with polynomial growth (meaning that the size of each ball is bounded by a polynomial function of its radius, see e.g.~\cite{KL03}), and classes with strongly sublinear separators (or equivalently, classes with polynomial expansion~\cite{DN16}). We now observe that in all these cases, the associated ball hypergraphs do not satisfy the Erd\H os-P\'osa property, even if all the balls have the same radius. 
That is, we can find $r$-ball hypergraphs in these classes with bounded $\nu$ and unbounded $\tau$. 
Our construction shows that this is true even in the seemingly simple case of subgraphs of a grid with all diagonals (i.e.\ strong products of two paths). 

\medskip

Fix two integers $k,\ell$ with $k\geq 3$, and $\ell$ sufficiently large compared to $k$ and divisible by $2(\binom{k}{2}-1)$. Given $k$ vertices $v_0,v_1,\ldots v_{k-1}$, an $\ell$-\emph{broom with root $v_0$ and leaves $v_1,\ldots,v_{k-1}$} is a tree $T$ of maximum degree 3 with root $v_0$ and leaves $v_1,\ldots,v_{k-1}$} such that
\begin{enumerate}
\item each leaf is at distance $\ell$ from the root $v_0$,
\item the ball of radius $\ell/2$ centered in $v_0$ in $T$ is a path (called the \emph{handle} of the broom), and
  \item the distance between every two vertices of degree 3 in $T$ is sufficiently large compared to $k$.
\end{enumerate}

We now construct a graph $G_{k,\ell}$ as follows. We start with a set $X$ of $k$ vertices $x_1,\ldots,x_k$, and a path of $\binom{k}{2}$ vertices with vertex set $Y=\{y_{\{i,j\}}\,|\,1\le i<j\le k\}$, disjoint from $X$. We then subdivide each edge of the latter path $\tfrac{\ell}{2} \frac{1}{\binom{k}{2}-1} -1$ times, so that the subdivided path has length $\ell/2$. Finally, for each $1\le i \le k$, we add an $\ell$-broom $T_i$ with root $x_i$ and leaves $Y_i=\{y_{\{i,j\}}\,|\, j\ne i\}$.

\begin{figure}[htb]
 \centering
 \includegraphics[scale=1]{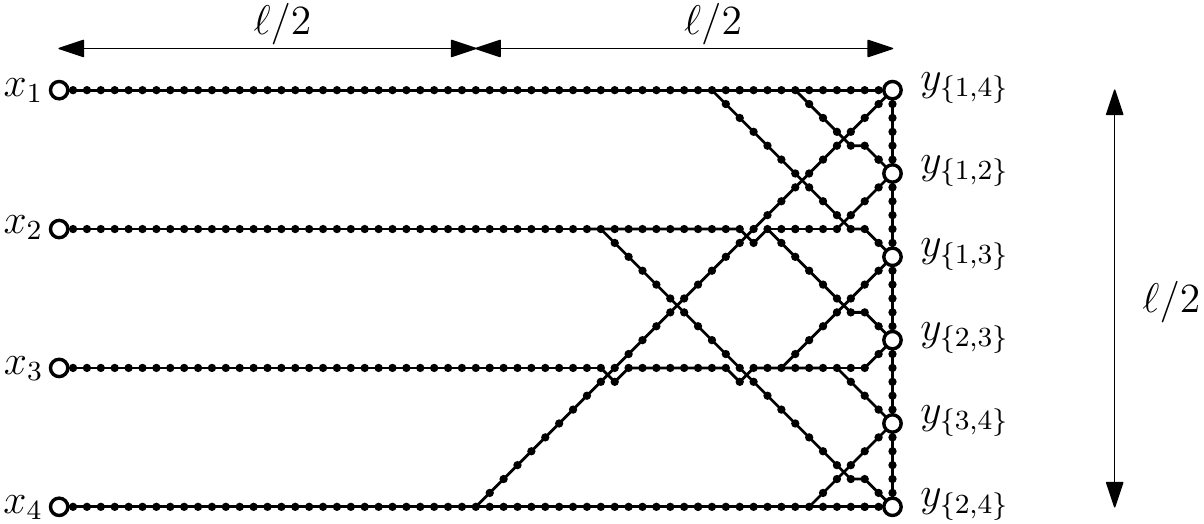}
 \caption{An embedding of the graph $G_{4,\ell}$ in the 2-dimensional grid with all diagonals (the grid itself is not depicted for the sake of clarity).}
 \label{fig:broom}
\end{figure}

We first claim that $G_{k,\ell}$ is a subgraph of the 2-dimensional grid with all diagonals (i.e.\ the strong products of two paths). 
To see this, place $X$ on a single column on the left, and $Y$ on another column on the right (in the sequence given by the path), at distance $\ell$ from the column of $X$, then draw each of the brooms in the plane (with  crossings allowed). Since the distance between two vertices of degree 3 in a broom is sufficiently large compared to  $k$, we can safely embed each topological crossing in the strong product of two edges (see Figure~\ref{fig:broom} for an example).

Let $\cH_{k,\ell}$ be the $\ell$-ball hypergraph of $G_{k,\ell}$ obtained by considering all the balls of radius $\ell$ in $G_{k,\ell}$. We first observe that $\nu(\cH_{k,\ell})=1$: this follows from the fact that each ball of radius $\ell$ centered in a vertex that does not belong to the handle of a broom contains all the vertices of $Y$, while every two vertices on the handles of two brooms $T_i$ and $T_j$ are at distance at most $\ell$ from $y_{\{i,j\}}$. Finally, for every two vertices $x_i$ and $x_j$ of $X$, note that $y_{\{i,j\}}$ is the unique vertex of $G_{k,\ell}$ lying at distance at most $\ell$ from $x_i$ and $x_j$, and thus $\tau(\cH_{k,\ell})\ge \tfrac{k}2$. It follows that there is no function $f$ such that $\tau(\cH)\le f(\nu(\cH))$ for every ball hypergraph of a subgraph of the strong product of two paths (even when all the balls in the ball hypergraph have the same radius).


\begin{acknowledgement} 
We thank the two anonymous reviewers for their detailed
comments and suggestions.
\end{acknowledgement}

\end{document}